\definecolor{darkgreen}{rgb}{0,0.45,0}
 \DeclareMathOperator{\ob}{ob}
\newcommand{\cat}[1]{\mathbf{#1}}
\newcommand{\op}{\mathrm{op}}
\newcommand{\thg}{{\mathord{\text{--}}}}
\newcommand{\abs}[1]{{\left|{#1}\right|}}
\newcommand{\cd}[2][]{\vcenter{\hbox{\xymatrix#1{#2}}}}
\renewcommand{\phi}{\varphi}
\newcommand{\A}{{\mathcal A}}
\newcommand{\B}{{\mathcal B}}
\newcommand{\C}{{\mathcal C}}
\newcommand{\D}{{\mathcal D}}
\newcommand{\E}{{\mathcal E}}
\newcommand{\F}{{\mathcal F}}
\newcommand{\I}{{\mathcal I}}
\renewcommand{\O}{{\mathcal O}}
\renewcommand{\P}{{\mathcal P}}
\newcommand{\Q}{{\mathcal Q}}
\newcommand{\R}{{\mathcal R}}
\newcommand{\V}{{\mathcal V}}
\newcommand{\W}{{\mathcal W}}
\newcommand{\xtor}[1]{\cdl[@1]{{} \ar[r]|-{\object@{|}}^{#1} & {}}}
\newcommand{\tor}{\ensuremath{\relbar\joinrel\mapstochar\joinrel\rightarrow}}
\def\hookleftarrowfill@{\arrowfill@\leftarrow\relbar{\relbar\joinrel\rhook}}
\def\twoheadleftarrowfill@{\arrowfill@\twoheadleftarrow\relbar\relbar}
\def\leftbararrowfill@{\arrowdoublefill@{\leftarrow\mkern-5mu}\relbar\mapstochar\relbar\relbar}
\def\Leftbararrowfill@{\arrowdoublefill@{\Leftarrow\mkern-2mu}\Relbar\Mapstochar\Relbar\Relbar}
\def\leftringarrowfill@{\arrowdoublefill@{\leftarrow\mkern-3mu}\relbar{\mkern-3mu\circ\mkern-2mu}\relbar\relbar}
\def\lefttriarrowfill@{\arrowfill@{\mathrel\triangleleft\mkern0.5mu\joinrel\relbar}\relbar\relbar}
\def\Lefttriarrowfill@{\arrowfill@{\mathrel\triangleleft\mkern1mu\joinrel\Relbar}\Relbar\Relbar}
\def\hookrightarrowfill@{\arrowfill@{\lhook\joinrel\relbar}\relbar\rightarrow}
\def\twoheadrightarrowfill@{\arrowfill@\relbar\relbar\twoheadrightarrow}
\def\rightbararrowfill@{\arrowdoublefill@{\relbar\mkern-0.5mu}\relbar\mapstochar\relbar\rightarrow}
\def\Rightbararrowfill@{\arrowdoublefill@{\Relbar\mkern-2mu}\Relbar\Mapstochar\Relbar\Rightarrow}
\def\rightringarrowfill@{\arrowdoublefill@\relbar\relbar{\mkern-2mu\circ\mkern-3mu}\relbar{\mkern-3mu\rightarrow}}
\def\righttriarrowfill@{\arrowfill@\relbar\relbar{\relbar\joinrel\mkern0.5mu\mathrel\triangleright}}
\def\Righttriarrowfill@{\arrowfill@\Relbar\Relbar{\Relbar\joinrel\mkern1mu\mathrel\triangleright}}
\def\leftrightarrowfill@{\arrowfill@\leftarrow\relbar\rightarrow}
\def\mapstofill@{\arrowfill@{\mapstochar\relbar}\relbar\rightarrow}
\renewcommand*\xleftarrow[2][]{\ext@arrow 20{20}0\leftarrowfill@{#1}{#2}}
\providecommand*\xLeftarrow[2][]{\ext@arrow 60{22}0{\Leftarrowfill@}{#1}{#2}}
\providecommand*\xhookleftarrow[2][]{\ext@arrow 10{20}0\hookleftarrowfill@{#1}{#2}}
\providecommand*\xtwoheadleftarrow[2][]{\ext@arrow 60{20}0\twoheadleftarrowfill@{#1}{#2}}
\providecommand*\xleftbararrow[2][]{\ext@arrow 10{22}0\leftbararrowfill@{#1}{#2}}
\providecommand*\xLeftbararrow[2][]{\ext@arrow 50{24}0\Leftbararrowfill@{#1}{#2}}
\providecommand*\xleftringarrow[2][]{\ext@arrow 10{26}0\leftringarrowfill@{#1}{#2}}
\providecommand*\xlefttriarrow[2][]{\ext@arrow 80{24}0\lefttriarrowfill@{#1}{#2}}
\providecommand*\xLefttriarrow[2][]{\ext@arrow 80{24}0\Lefttriarrowfill@{#1}{#2}}
\renewcommand*\xrightarrow[2][]{\ext@arrow 01{20}0\rightarrowfill@{#1}{#2}}
\providecommand*\xRightarrow[2][]{\ext@arrow 04{22}0{\Rightarrowfill@}{#1}{#2}}
\providecommand*\xhookrightarrow[2][]{\ext@arrow 00{20}0\hookrightarrowfill@{#1}{#2}}
\providecommand*\xtwoheadrightarrow[2][]{\ext@arrow 03{20}0\twoheadrightarrowfill@{#1}{#2}}
\providecommand*\xrightbararrow[2][]{\ext@arrow 01{22}0\rightbararrowfill@{#1}{#2}}
\providecommand*\xRightbararrow[2][]{\ext@arrow 04{24}0\Rightbararrowfill@{#1}{#2}}
\providecommand*\xrightringarrow[2][]{\ext@arrow 01{26}0\rightringarrowfill@{#1}{#2}}
\providecommand*\xrighttriarrow[2][]{\ext@arrow 07{24}0\righttriarrowfill@{#1}{#2}}
\providecommand*\xRighttriarrow[2][]{\ext@arrow 07{24}0\Righttriarrowfill@{#1}{#2}}
\providecommand*\xmapsto[2][]{\ext@arrow 01{20}0\mapstofill@{#1}{#2}}
\providecommand*\xleftrightarrow[2][]{\ext@arrow 10{22}0\leftrightarrowfill@{#1}{#2}}
\providecommand*\xLeftrightarrow[2][]{\ext@arrow 10{27}0{\Leftrightarrowfill@}{#1}{#2}}
\newcommand{\twocong}[2][0.5]{\ar@{}[#2] \save ?(#1)*{\cong}\restore}
\newcommand{\twoeq}[2][0.5]{\ar@{}[#2] \save ?(#1)*{=}\restore}
\newcommand{\rtwocell}[3][0.5]{\ar@{}[#2] \ar@{=>}?(#1)+/l 0.2cm/;?(#1)+/r 0.2cm/^{#3}}
\newcommand{\ltwocell}[3][0.5]{\ar@{}[#2] \ar@{=>}?(#1)+/r 0.2cm/;?(#1)+/l 0.2cm/^{#3}}
\newcommand{\ltwocello}[3][0.5]{\ar@{}[#2] \ar@{=>}?(#1)+/r 0.2cm/;?(#1)+/l 0.2cm/_{#3}}
\newcommand{\dtwocell}[3][0.5]{\ar@{}[#2] \ar@{=>}?(#1)+/u  0.2cm/;?(#1)+/d 0.2cm/^{#3}}
\newcommand{\dltwocell}[3][0.5]{\ar@{}[#2] \ar@{=>}?(#1)+/ur  0.2cm/;?(#1)+/dl 0.2cm/^{#3}}
\newcommand{\drtwocell}[3][0.5]{\ar@{}[#2] \ar@{=>}?(#1)+/ul  0.2cm/;?(#1)+/dr 0.2cm/^{#3}}
\newcommand{\dthreecell}[3][0.5]{\ar@{}[#2] \ar@3{->}?(#1)+/u  0.2cm/;?(#1)+/d 0.2cm/^{#3}}
\newcommand{\utwocell}[3][0.5]{\ar@{}[#2] \ar@{=>}?(#1)+/d 0.2cm/;?(#1)+/u 0.2cm/_{#3}}
\newcommand{\dtwocelltarg}[3][0.5]{\ar@{}#2 \ar@{=>}?(#1)+/u  0.2cm/;?(#1)+/d 0.2cm/^{#3}}
\newcommand{\utwocelltarg}[3][0.5]{\ar@{}#2 \ar@{=>}?(#1)+/d  0.2cm/;?(#1)+/u 0.2cm/_{#3}}
\theoremstyle{definition}
\theoremstyle{plain}
\newtheorem{Thm}[subsection]{Theorem}
\newtheorem{Prop}[subsection]{Proposition}
\newtheorem{Cor}[subsection]{Corollary}
\numberwithin{equation}{section}
\theoremstyle{definition}
\newtheorem{Defn}[subsection]{Definition}
\newtheorem{Ex}[subsection]{Example}
\newtheorem{Exs}[subsection]{Examples}
\newtheorem{Rk}[subsection]{Remark}
\begin{document}
\leftmargini=2em
\title{Topological = total}
\subjclass[2010]{18A22,18D20,18B30,06A75}
\author{Richard Garner} 
\address{Department of Computing, Macquarie  University, NSW 2109, Australia} 
\email{richard.garner@mq.edu.au}

\date{\today}

\thanks{This work was supported by the Australian Research Council's
  \emph{Discovery Projects} scheme, grant number DP110102360.}

\begin{abstract}
  A notion of central importance in categorical topology is that of
  topological functor.  A faithful functor $\E \to \B$ is called
  topological if it admits cartesian liftings of all (possibly large)
  families of arrows; the basic example is the forgetful functor
  $\cat{Top} \to \cat{Set}$. A topological functor $\E \to 1$ is the
  same thing as a (large) complete preorder, and the general
  topological functor $\E \to \B$ is intuitively thought of as a
  ``complete preorder relative to $\B$''. We make this intuition
  precise by considering an enrichment base $\Q_\B$ such that
  $\Q_\B$-enriched categories are faithful functors into $\B$, and
  show that, in this context, a faithful functor is topological if and
  only if it is total (=totally cocomplete) in the sense of
  Street--Walters. We also consider the MacNeille completion of a
  faithful functor to a topological one, first described by Herrlich,
  and show that it may be obtained as an instance of Isbell's generalised
  notion of MacNeille completion for enriched categories.
\end{abstract}

\maketitle

\section{Introduction}
\label{sec:intro}

One of the more inconvenient facts in mathematics is that of the
relatively bad behaviour of the category $\cat{Top}$ of topological
spaces: though complete, cocomplete and extensive, it is not regular,
coherent, locally presentable, or (locally) cartesian closed. Many
authors have thus been led to propose replacing the category of spaces
by some other category which either embeds $\cat{Top}$ or embeds into
$\cat{Top}$ in a reasonable manner, but which possesses some of the
desirable properties that $\cat{Top}$ itself lacks; some examples are
the categories of quasitopological spaces, approach spaces,
convergence spaces, uniformity spaces, nearness spaces, filter spaces,
epitopological spaces, Kelley spaces, compact Hausdorff spaces,
$\Delta$-generated spaces, or of sheaves on some small subcategory of
$\cat{Top}$.

In attempting to impose some kind of order on this proliferation of
notions, a useful organising framework is that of \emph{categorical
  topology}~\cite{Herrlich1971Categorical}; a key insight of which is
that categories of space-like structures are most fruitfully studied
not as categories \emph{simpliciter}, but as categories equipped with
a faithful functor $\A \to \cat{Set}$. Desirable properties of a
category of space-like structures can be re-expressed as properties of
this functor, and the process of replacing $\cat{Top}$ by a category
with such properties then becomes that of adjoining those desirable
properties to the usual forgetful functor $\cat{Top} \to \cat{Set}$,
or some subfunctor thereof; see~\cite{Lowen2001Supercategories} for an
overview.

In general, categorical topology studies faithful functors not just
into $\cat{Set}$, but into an arbitrary base category $\B$. When $\B =
1$, such functors are simply preorders; this motivates the step of
regarding a general faithful functor $p \colon \E \to \B$ as a
``preorder relative to $\B$'', and many aspects of categorical
topology can be seen as as elucidation of this idea.  Of particular
importance are the ``complete preorders relative to $\B$'', the
so-called \emph{topological
  functors}~\cite{Antoine1966Extension,Roberts1968A-characterization,Herrlich1974Topological}.
A faithful functor is topological if it admits a generalisation of the
characterising property of a Grothendieck fibration in which one may
form cartesian liftings not just of single arrows, but of arbitrary
(possibly large) families of them.

The intuition that topological functors are relativised complete
preorders can be seen in many places throughout the literature: for
example, in the various completion processes by which a faithful
functor may be turned into a topological
one~\cite{Herrlich1976Initial}, which correspond to the constructions
by which a poset may be turned into a complete lattice, and indeed
reduce to these constructions when $\B = 1$. However, nowhere is this
basic intuition wholly justified. The objective of this paper is to
rectify this by way of enriched category theory: given a category
$\B$, we describe an enrichment base for which the enriched categories
are faithful functors into $\B$, and the enriched categories which are
\emph{total} (=~totally cocomplete)---in a sense to be recalled
below---are the topological functors into $\B$.

It is worth saying a few words about the kind of enrichment base we
will require. Most enrichments, as in~\cite{Kelly1982Basic}, involve a
monoidal category $\V$, with a $\V$-category then having homs which
are objects of $\V$. For example, when $\V$ is the monoidal poset
$(\cat 2, \wedge, \top)$, $\V$-categories are precisely preorders, and
so we may regard order theory as a particular kind of enriched
category theory.  In the early 1980's,
Walters~\cite{Walters1982Sheaves,Walters1981Sheaves} realised the
value of a more general kind of enrichment (first suggested by
B\'enabou~\cite{Benabou1967Introduction}) based on a bicategory $\W$;
in a $\W$-category, each object is typed by an object of $\W$, while the
homs are appropriately-typed \emph{morphisms} of $\W$. This
kind of enrichment includes the more familiar kind on regarding a
monoidal category $\V$ as a one-object bicategory.

Walters' application for this notion was to sheaf theory: for each
site, he describes a bicategory $\W$ such that Cauchy-complete,
symmetric, skeletal $\W$-categories are precisely sheaves on that
site. More generally, posets internal to sheaves correspond to
Cauchy-complete skeletal $\W$-categories\footnote{Although the general
  Cauchy-complete $\W$-category corresponds not to a preorder internal
  to sheaves, but to a stack of preorders.} and so we may regard order
theory internal to a topos as another kind of enriched category
theory.  Both the monoidal poset $(\cat 2, \wedge, \top)$ (seen as a
one-object bicategory) and Walters' bases for enrichment are instances
of a general class of well-behaved bicategories, the
\emph{quantaloids}~\cite{Rosenthal1991Free}, whose enriched category
theory~\cite{Stubbe2005Categorical,Stubbe2006Categorical} behaves like
non-standard order theory. A quantaloid is a bicategory whose homs are
complete posets, and whose composition preserves joins in each
variable. The enrichment bases which give rise to faithful functors
over a base $\B$ are also quantaloids, and this justifies our
viewing their theory as non-standard order theory: our main result
can then be seen as saying that, for this non-standard order
theory, the complete preorders are the topological functors into $\B$.

Beyond proving our basic equivalence, we give an application to
\emph{MacNeille completions}. The classical MacNeille
completion~\cite{MacNeille1937Partially} of a poset $P$ is the
smallest complete poset into which $P$ embeds, and may be constructed
by Dedekind cuts: its elements are pairs $(L,U)$ of subsets of $P$
wherein $L$ is the set of all lower bounds of $U$, and $U$ the set of
all upper bounds of $L$. In~\cite{Herrlich1976Initial} is described a
more general ``MacNeille completion'', which turns a faithful functor
into $\B$ to a topological one, and includes the classical MacNeille
completion as the special case $\B = 1$. We unify these constructions by showing
that they are an instance of the general notion of ``MacNeille
completion'' for enriched categories first described by
Isbell~\cite{Isbell1960Adequate}.

\section{Topological functors}
Since questions of size are relevant in what follows, let us first
make clear our conventions. A set will be called \emph{small} if it
lies within some fixed Grothendieck universe $\kappa$, and
\emph{large} otherwise. All categories in this paper will be assumed
to have small hom-sets and a (possibly large) set of
objects\footnote{Our conventions deviate here from those commonly used
  in the categorical topology literature, where a small category is
  defined as one with a set of objects, and a large category may have
  a proper class of them. This avoids problems such as the failure of
  the collection of subclasses of a fixed class to form a class.}.

We begin by recalling the definition of topological
functor~\cite{Antoine1966Extension,Roberts1968A-characterization}. 

\begin{Defn}\label{defn:topo}
  Let $p \colon \E \to \B$ be a faithful functor, and $I$ a (possibly
  large) set. Given objects $(x_i \in \E)_{i \in I}$ and
  morphisms $(g_i \colon px_i \to z \in \B)_{i \in I}$, a \emph{final
    lifting} is an object $\bar z \in \E$ with $p \bar z = z$ such
  that, for any
   $\theta \in \B(z, pe)$,
     \begin{equation}\tag{*}\label{eq:lift}
    \text{$z \xrightarrow{\theta} pe$ lifts to a map $\bar z \to e$ iff each
      $px_i \xrightarrow{\theta . g_i} pe$
      lifts to a map $x_i \to e$.}
  \end{equation}    
The functor $p$ is called \emph{topological} if it admits all final
liftings.
\end{Defn}

\begin{Exs}
  
  The basic example of a topological functor is the forgetful functor
  $\cat{Top} \to \cat{Set}$: given spaces $(X_i \mid i \in I)$ and
  functions $(g_i \colon UX_i \to Z)$, we obtain a final lifting by
  equipping $Z$ with the topology in which $V \in \O(Z)$ just when
  $g_i^{-1}(V) \in \O(X_i)$ for each $i$.  Similarly, the categories
  of quasitopological spaces, limit spaces, filter spaces,
  subsequential spaces and so on, all admit topological forgetful
  functors to $\cat{Set}$; see~\cite{Dubuc1979Concrete}. Other
  interesting examples of categories topological over $\cat{Set}$
  include the category of bornological spaces; the category $\F$ of
  sets equipped with a filter of subsets~\cite{Blass1977Two-closed};
  and the categories of diffeological and Chen
  spaces~\cite{Baez2011Convenient}.

  An easy way of obtaining topological functors whose codomain is not
  $\cat{Set}$ is using the result that, if $\mathbb T$ is a small
  category with finite limits, and $p \colon \E \to \B$ is
  topological, then so too is $\cat{Lex}(\mathbb T, \E) \to
  \cat{Lex}(\mathbb T, \B)$. So, for example, the forgetful functors
  from topological groups or topological vector spaces to groups or
  vector spaces are topological.  Another class of examples to bear in
  mind are those with $\B = 1$; as in the introduction, a faithful
  functor into $1$ is just a (large) preorder, and such a functor is
  topological just when the preorder admits all joins.
\end{Exs}

\begin{Rk}\label{rk:topoduality}
  Dually, an \emph{initial lifting} of a family $(g_i \colon z \to
  px_i \in \B)_{i \in I}$ is a final lifting with respect to $p^\op$;
  and clearly, a functor $p$ admits all initial liftings just when
  $p^\op$ is topological. One might be tempted to call such a $p$
  \emph{optopological}, but it turns out that this is unnecessary: a
  functor is topological if and only if its opposite is so. This is the
  \emph{topological duality
    theorem}~\cite{Antoine1966Extension,Roberts1968A-characterization};
  in the case $\B = 1$, it reduces to the result that a preorder
  admits all meets if and only if it admits all joins. We return
  to this point in Section~\ref{sec:duality} below.
\end{Rk}


\begin{Rk}
  The definition of a topological functor is sometimes taken to
  include extra side-conditions. One is that it should be
  \emph{amnestic}---meaning that the fibres are posets, not
  preorders. This requirement is inessential for the basic
  theory. Another common side-condition is that the fibres should be
  small categories; again, this is unnecessary for the basic theory.
\end{Rk}

\section{Faithful functors as enriched categories}
Towards our characterisation of topological functors, we now describe
how faithful functors into a fixed base category $\B$ may be seen as
categories enriched in an associated quantaloid.  As in the
introduction, a \emph{quantaloid}~\cite{Rosenthal1991Free} is a
bicategory $\Q$ whose hom-categories $\Q(A,B)$ are complete (small)
lattices, and whose whiskering functors $(\thg) \circ f \colon \Q(B,
C) \to \Q(A,C)$ and $g \circ (\thg) \colon \Q(A,B) \to \Q(A,C)$
preserve all joins.  It follows that these whiskering functors have
right adjoints, denoted by
\begin{equation*}
  [f, \thg] \colon \Q(A,C) \to \Q(B,C) \qquad \text{and} \qquad \{g,
  \thg\} \colon \Q(A,C) \to \Q(A,B)
\end{equation*}
respectively. Existence of these right adjoints is the
bicategorical counterpart to a monoidal category's being left and
right closed, and ensures that there is a workable theory of enriched
categories over such a base.

\begin{Exs}
  A one-object quantaloid is a (unital) \emph{quantale} $(Q, \&, 1)$
  in the sense of~\cite{Mulvey1986}: a complete lattice $Q$ with an
  associative unital multiplication $\& \colon Q \times Q \to Q$ that
  preserves joins in each variable. In particular, any locale $L$
  yields a quantale $(L, \wedge, \top)$. Another important example,
  due to Lawvere~\cite{Lawvere1973Metric}, is the quantale $(\mathbb
  R_+, +, 0)$ of non-negative real numbers with the reverse of the
  usual ordering.

  Quantaloids with more than one object arise, amongst other places,
  in the work of Walters~\cite{Walters1981Sheaves,Walters1982Sheaves}:
  given a topological space (or locale) $X$, he considers the
  quantaloid $\Q_X$ whose objects are open sets of $X$, and for which
  $\Q_X(U,V)$ is the complete lattice of open sets of $X$ contained in
  $U \cap V$. Analogously, if $(\C, J)$ is a (standard) site, then there is a
  quantaloid $\Q_{\C}$ whose objects are those of $\C$, and for which
  $\Q_\C(X,Y)$ is the complete lattice of subsheaves of $\C(\thg, X
  \times Y)$. See~\cite{Heymans2012Elementary} for an abstract
  characterisation of such quantaloids.
\end{Exs}


The notions of category, functor and transformation enriched in a
quantaloid are particular cases of the bicategorical ones
of~\cite{Street1983Enriched}, though rather easier to state in this
special case due to the partially ordered homs, which ensure that all
$2$-cell axioms are automatically satisfied. For more on
quantaloid-enriched category theory,
see~\cite{Stubbe2005Categorical,Stubbe2006Categorical}.
\begin{Defn}
If $\Q$ is a quantaloid, a \emph{$\Q$-enriched category}, or
\emph{$\Q$-category} $\C$, is given by:
\begin{itemize}
\item A (possibly large) set of objects $\ob \C$;
\item For each $x \in \ob \C$, an
  object $\abs x \in \Q$, called the \emph{extent} of $x$; and
\item For all $x, y \in \ob \C$, a hom-object $\C(x,y) \in \Q(\abs x, 
  \abs y)$;
\end{itemize}
all such that
\begin{itemize}
\item For all $x \in \ob \C$, we have $1_{\abs x} \leqslant \C(x,x)
  $ in $\Q(\abs x, \abs x)$;
\item For all $x,y,z \in \ob \C$, we have $\C(y,z) \circ \C(x,y)
  \leqslant \C(x,z)$ in $\Q(\abs x, \abs z)$.
\end{itemize}
If $\C$ and $\D$ are $\Q$-categories, a \emph{$\Q$-functor} $F \colon
\C \to \D$ is an extent-preserving function $F \colon \ob \C \to \ob
\D$ such that $\C(x,y) \leqslant \D(Fx,Fy)$ in $\Q(\abs x, \abs y)$
for all $x,y \in \ob \C$.  Between $\Q$-functors $F,G \colon \C \to
\D$ there exists at most one $\Q$-transformation, which exists just when $1_{\abs x} \leqslant \D(Fx, Gx)$
for all $x \in \ob \C$, and will then be notated as $F \leqslant G$. We write $\Q\text-\cat{CAT}$ for the locally
posetal $2$-category of (possibly large) $\Q$-categories.
\end{Defn}
\begin{Ex}
  When $\Q$ is a quantale seen as a one-object quantaloid,
  $\Q$-enriched categories are categories enriched in the quantale
  seen as a monoidal poset. For example, categories enriched in the
  one-object quantaloid $(\cat 2, \wedge, \top)$ are are preordered
  sets; whilst Lawvere showed in~\cite{Lawvere1973Metric} that
  categories enriched in the one-object quantaloid $(\mathbb R_+, +,
  0)$ are (generalised) metric spaces. For the quantaloid $\Q_X$
  associated to a topological space $X$, Cauchy-complete skeletal $\Q_X$-enriched
  categories are, as in the introduction, posets internal to
  $\cat{Sh}(X)$, and correspondingly for the quantaloid associated to
  a site $(\C, J)$.
\end{Ex}

We now describe the quantaloid-enrichments that will be relevant in
this paper.
 \begin{Defn}\cite{Rosenthal1991Free}
   The \emph{free quantaloid} $\Q_\B$ on an ordinary category $\B$ has
   the same objects as $\B$; morphisms $U \colon X \tor Y$ in $\Q_\B$
   are subsets $U \subseteq \B(X,Y)$, ordered by inclusion; the
   composition of $U \colon X \tor Y$ with $V \colon Y \tor Z$ is
   given by $V \circ U = \{v \circ u \mid v \in V, u \in U\} \subseteq
   \B(X,Z)$; while the identity map at $X$ is $\{1_X\} \colon X \tor
   X$.  The right adjoints to whiskering are constructed as follows
   for each $U \colon X \tor Y$, $V \colon Y \tor Z$ and $W \colon X
   \tor Z$ in $\Q_\B$:
\begin{align*}
  [U, W] &= \{ v \in \B(Y,Z) \mid vu \in W \text{ for all } u \in U \}\\
 \text{and} \quad
  \{V, W\} &= \{ u \in \B(X,Y) \mid vu \in W \text{ for all } v \in V
  \}\rlap{ .}
\end{align*}
 \end{Defn}



 Given an ordinary category $\B$, consider now what it is to give a
 category enriched in the free quantaloid $\Q_\B$. We have
 a set $\ob \E$ of objects; for each $x \in \ob \E$ an object $\abs x
 \in \ob \B$; and for each $x,y \in \ob \E$ a subset $\E(x,y)
 \subseteq \B(\abs x, \abs y)$, in such a way that $1_{\abs x} \in
 \E(x,x)$ and whenever $f \in \E(x,y)$ and $g \in \E(y,z)$ we have
 also $g \circ f \in \E(x,z)$. These data are clearly those of an
 ordinary category $\E$ together with a faithful functor $p \colon \E
 \to \B$ sending $x$ to $\abs x$. Similar calculations with respect to
 functors and transformations now
 show that:
\begin{Prop}\label{prop:faithful}
  If $\Q_\B$ is the free quantaloid on the ordinary category $\B$,
  then the $2$-category $\Q_\B\text-\cat{CAT}$ is $2$-equivalent to the
  full sub-$2$-category of the strict slice 
  $\cat{CAT}/\B$ on the faithful functors.
\end{Prop}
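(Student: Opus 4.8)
The plan is to write down an explicit strict $2$-functor $\Phi \colon \Q_\B\text-\cat{CAT} \to \cat{CAT}/\B$, check that it lands in the full sub-$2$-category on faithful functors, and show it is there a local isomorphism which is essentially surjective on objects --- these two facts together making it a $2$-equivalence. On objects, $\Phi$ is the assignment already computed in the paragraph preceding the statement: a $\Q_\B$-category $\C$ is sent to the ordinary category $\E_\C$ with the same objects as $\C$, with $\E_\C(x,y) = \C(x,y) \subseteq \B(\abs x, \abs y)$ and with composition and units inherited from $\B$, together with the faithful functor $p_\C \colon \E_\C \to \B$ given on objects by $x \mapsto \abs x$ and acting as the displayed inclusions on hom-sets. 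A $\Q_\B$-functor $F \colon \C \to \D$, being extent-preserving with $\C(x,y) \subseteq \D(Fx,Fy)$, determines a functor $\E_\C \to \E_\D$ over $\B$ --- namely $x \mapsto Fx$ on objects and $f \mapsto f$ on arrows --- and a $\Q_\B$-transformation $F \leqslant G$ determines the natural transformation over $\B$ whose $x$-component is $1_{\abs x} \in \D(Fx, Gx)$. One checks routinely that $\Phi$ respects composition and identities of $1$-cells; and since the hom-categories of $\Q_\B\text-\cat{CAT}$, and those of $\cat{CAT}/\B$ between faithful functors ($p_\D$ faithful forces at most one vertical $2$-cell between parallel $1$-cells), are preorders, all conditions on $2$-cells are automatic, so $\Phi$ is a strict $2$-functor.

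First I would verify that each functor $\Phi \colon \Q_\B\text-\cat{CAT}(\C,\D) \to \cat{CAT}/\B(\Phi\C, \Phi\D)$ is an isomorphism of categories; as both are preorders it is enough to produce a bijection on objects that is order-preserving and order-reflecting. The key observation is that, since $p_\C$ and $p_\D$ are literal inclusions on hom-sets, any functor $H \colon \E_\C \to \E_\D$ with $p_\D H = p_\C$ satisfies $Hf = p_\D(Hf) = p_\C(f) = f$ for every arrow $f$ of $\E_\C$; in particular $\C(x,y) \subseteq \D(Hx, Hy)$, so the object-function of $H$ is a $\Q_\B$-functor $F$, necessarily the unique one with $\Phi F = H$. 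Thus $\Phi$ is a bijection on objects of hom-categories. Order-reflection is the remaining point: a $2$-cell $\Phi F \Rightarrow \Phi G$ in the strict slice is exactly a natural transformation whose components are sent by $p_\D$ to identities, so it exists precisely when $1_{\abs x} \in \D(Fx, Gx)$ for all $x$, that is, precisely when $F \leqslant G$. Hence $\Phi$ is a local isomorphism.

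Finally I would show $\Phi$ is essentially surjective on objects. Given any faithful $p \colon \E \to \B$, let $\C$ be the $\Q_\B$-category with $\ob \C = \ob \E$, $\abs x = px$, and $\C(x,y) = \{\, pf \mid f \in \E(x,y)\,\} \subseteq \B(px, py)$; the $\Q_\B$-category axioms are immediate from functoriality of $p$. Then $f \mapsto pf$ is a functor $\E \to \E_\C$ over $\B$ that is the identity on objects and, because $p$ is faithful, a bijection on each hom-set, hence an isomorphism in $\cat{CAT}/\B$; so $p \cong \Phi \C$, with both functors faithful. Combining the two verifications, $\Phi$ is a strict $2$-functor that is a local isomorphism and essentially surjective onto the faithful functors, hence a $2$-equivalence. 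I do not anticipate a genuine obstacle: the argument is bookkeeping against the definitions, and the only points needing care are the convention that $2$-cells of the strict slice $\cat{CAT}/\B$ are whiskered by the codomain projection to the identity (this, together with faithfulness, is what makes the hom-preorders match up), and the observation that one gets an equivalence rather than an isomorphism of $2$-categories, since the arrows of a general faithful functor are only mapped to --- not literally equal to --- arrows of $\B$.
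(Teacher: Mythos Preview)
Your proposal is correct and is precisely the ``similar calculations with respect to functors and transformations'' that the paper gestures at but does not write out; the paper gives no explicit proof beyond the object-level unpacking preceding the statement, so your argument is the expected one, carried out in full detail. The only point where you go beyond the paper is in noting carefully why one obtains an equivalence rather than an isomorphism of $2$-categories, which is a helpful clarification.
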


\section{Totality for quantaloid-enriched categories}
In this section, we discuss totality in the context of
quantaloid-enriched categories. The notion of \emph{totality} of a
category was introduced
in~\cite{Street1974Elementary,Street1978Yoneda} in an abstract context
broad enough to encompass ordinary categories but also enriched and
internal ones. An ordinary (possibly large) category $\C$ is called
\emph{total} if its Yoneda embedding $\C \to [\C^\op, \cat{Set}]$
admits a left adjoint\footnote{Our standing assumptions, that every
  category be locally small, ensure that the Yoneda embedding $\C \to
  [\C^\op, \cat{Set}]$ exists; yet if $\C$ is not small, then
  $[\C^\op, \cat{Set}]$ may itself fail to satisfy this same standing
  assumption. This is, in fact, an instance of the delicacy described in
  the following paragraph: the category of presheaves on the large
  $\cat{Set}$-category $\C$ need not form a $\cat{Set}$-category.}.
Totality implies cocompleteness, the existence of all small colimits,
but also the existence of certain large ones, which can be used, for
example, to avoid solution-set conditions in the adjoint functor
theorem;
see~\cite[Theorem~18]{Street1974Elementary}. 
Most categories arising in mathematical practice are total: for
instance, any locally presentable category, in particular, any
Grothendieck topos; any category monadic over $\cat{Set}$; any
category admitting a topological functor to $\cat{Set}$; and so on.

As explained in~\cite{Kelly1986A-survey}, defining totality in the
enriched context requires some delicacy: the na\"ive
definition---involving a left adjoint to the Yoneda embedding into the
enriched presheaf category---is complicated by the fact that the
presheaves on a large $\W$-category in general only form a
$\W'$-category for some universe-enlargement $\W'$ of $\W$. One can
avoid this issue by restating the definition of totality purely in
terms of the large colimits required to exist, but when enriching in a
quantaloid $\Q$ this is unneccesary. Because the hom-categories of
$\Q$ are complete \emph{posets}, they admit all of the large limits
necessary for the presheaves on a large $\Q$-category to exist as a
$\Q$-category; and so the na\"ive definition of totality is, in this
context, valid.

\begin{Defn}\label{def:presheaves}
Let $\Q$ be a quantaloid, and $\C$ a $\Q$-enriched category. A
\emph{presheaf} $\phi$ on $\C$ is given by:
\begin{itemize}
\item An object $\abs \phi \in \Q$, the \emph{extent} of $\phi$; and
\item For each $x \in \ob \C$, an arrow $\phi(x) \colon \abs x \to \abs
  \phi$ in $\Q$,
\end{itemize}
such that we have $\phi(y)
\circ \C(x,y) \leqslant \phi(x) \colon \abs x \to \abs \phi$ for all $x, y \in \ob \C$. The
\emph{presheaf $\Q$-category} $\P \C$ has these presheaves as objects,
with the specified extents, and
hom-arrows given
by 
\[
\P\C(\phi, \psi) = \textstyle\bigwedge_{x \in \C} [\phi(x), \psi(x)]
\colon \abs \phi \to \abs \psi\rlap{ .}
\]
The \emph{Yoneda embedding} $Y \colon \C \to \P\C$ sends $x$ to the
representable presheaf $\C(\thg, x)$ with extent $\abs x$ and with
components $\C(\thg, x)(y) = \C(y,x)$. 
\end{Defn}
\begin{Defn}
A $\Q$-category  $\C$ is called \emph{total}
if the Yoneda embedding $Y \colon \C \to \P \C
$ admits a left adjoint in $\Q\text-\cat{CAT}$.
\end{Defn}
We prefer to use \emph{total} rather than \emph{cocomplete} for this
notion, reserving the latter to mean, as is standard, ``having all
small colimits''. Note that in~\cite{Stubbe2005Categorical}, Stubbe
uses \emph{cocomplete} for what appears to be our \emph{total}; but he
in fact works under the restriction that $\Q$ should be a \emph{small}
quantaloid, and $\C$ a \emph{small} $\Q$-category, so that his
nomenclature is compatible with ours.
\begin{Ex}\label{ex:pctotal}
  For any $\Q$-category $\C$, $\P \C$ is total; the left adjoint
  $\mu \colon \P\P\C \to \P\C$ of the Yoneda embedding is defined at
  $\Phi$ by $\mu(\Phi)(x) = \bigvee_{\phi \in \P \C} \Phi(\phi) \circ \phi(x)$.
\end{Ex}
As explained above, totality is equivalent to the existence of certain large
colimits.  In the quantaloid-enriched case, it is actually equivalent 
to the existence of \emph{all} large colimits.
\begin{Defn}
  Let $\Q$ be a quantaloid and $F \colon \I \to \C$ a $\Q$-functor.
  \begin{itemize}
  \item The \emph{singular $\Q$-functor} $\C(F,\thg) \colon \C \to \P \I$
    sends $c \in \C$ to $\C(F, c)$ with extent $\abs c$ and
    components $\C(F,c)(x) = \C(Fx,d) \colon \abs x \to \abs c$.
\vskip0.5\baselineskip
  \item Given $\phi \in \P \I$, a \emph{weighted colimit of $F$ by
    $\phi$} is a left adjoint  to $\C(F,\thg)$ at $\phi$,
     given by an object $\phi \star F$ of $\C$ with extent $\abs
    \phi$ such that
    $\C(\phi \star F, c) = \P\I(\phi, \C(F, c))$ in $\Q(\abs \phi,
    \abs c)$ for all $c \in \C$. 
  \end{itemize}
\end{Defn}
\begin{Prop}\label{prop:totalallcolims}\cite[Corollary 5.4]{Stubbe2005Categorical}
  A $\Q$-category $\C$ is total if and only if it admits all (possibly
  large) colimits.
\end{Prop}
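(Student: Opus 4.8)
The plan is to prove the two implications separately. The ``if'' direction (admitting all colimits implies totality) is essentially definitional: specializing the hypothesis to $\I = \C$ and $F = \id_\C$, the singular $\Q$-functor $\C(\id_\C, \thg)\colon \C \to \P\C$ is, on unwinding the definitions, exactly the Yoneda embedding $Y$, so a weighted colimit $\phi \star \id_\C$ is precisely a left adjoint to $Y$ at $\phi$. By the usual argument that objectwise left adjoints organize into a genuine left adjoint functor (which in the locally posetal setting is entirely routine, the resulting $L$ being automatically extent-preserving and monotone and the triangle identities being automatic since $2$-cells are unique), the existence of all such colimits says exactly that $Y$ has a left $\Q$-adjoint, i.e.\ that $\C$ is total.

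For the ``only if'' direction, I would first prove the stronger fact that the presheaf $\Q$-category $\P\D$ admits \emph{all} colimits, for every $\Q$-category $\D$ (this refines Example~\ref{ex:pctotal}). Given $G \colon \I \to \P\D$ and $\phi \in \P\I$, I would take $\phi \star G$ to be the presheaf on $\D$ with extent $\abs\phi$ and components
\[
  (\phi \star G)(x) = \bigvee_{i \in \ob\I} \phi(i) \circ G(i)(x) \colon \abs x \to \abs\phi\rlap{ ,}
\]
the joins existing since the homs of $\Q$ are complete lattices. That this defines a presheaf follows from the presheaf inequalities for the $G(i)$ together with the fact that composition in $\Q$ preserves joins in each variable; and the defining equation $\P\D(\phi \star G, \psi) = \P\I(\phi, \P\D(G, \psi))$ for all $\psi \in \P\D$ is then a direct calculation from the formula for presheaf-category hom-arrows, using the quantaloid identities $[\,\textstyle\bigvee_j a_j, b\,] = \bigwedge_j [a_j, b]$ and $[g \circ f, w] = [g, [f, w]]$ and the fact that each $[f, \thg]$, being a right adjoint, preserves meets.

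With this in hand I would use totality of $\C$ to realize it as a reflective, fully faithful sub-$\Q$-category of $\P\C$: the Yoneda embedding $Y$ is fully faithful by the (easily verified) Yoneda lemma $\P\C(Yx, \psi) = \psi(x)$, and has a left adjoint $L \dashv Y$ by hypothesis. Then for $F \colon \I \to \C$ and $\phi \in \P\I$ I would form $\phi \star (Y \circ F)$ in $\P\C$ (which exists by the previous step) and set $\phi \star F \defeq L(\phi \star (Y \circ F))$; for every $c \in \C$,
\[
  \C\bigl(L(\phi \star Y F),\, c\bigr) = \P\C\bigl(\phi \star Y F,\, Y c\bigr) = \P\I\bigl(\phi,\, \P\C(Y F, Y c)\bigr) = \P\I\bigl(\phi,\, \C(F, c)\bigr)
\]
by, in order, the adjunction $L \dashv Y$, the colimit equation in $\P\C$, and the Yoneda lemma, which identifies $\P\C(YF, Yc)$ with $\C(F, c)$ as presheaves on $\I$ since $\P\C(YFi, Yc) = Yc(Fi) = \C(Fi, c)$. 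This exhibits $L(\phi \star YF)$ as the weighted colimit $\phi \star F$, so $\C$ admits all colimits.

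The step I expect to be the main obstacle is the verification in the second paragraph that the pointwise formula for $\phi \star G$ genuinely has the universal property of a weighted colimit in $\P\D$: no individual step is deep, but one must keep careful track of the extents (the domains and codomains in $\Q$) of all the arrows involved and apply the right-lifting operations $[\thg, \thg]$ in precisely the correct slots. Once that lemma is established, everything else is formal.
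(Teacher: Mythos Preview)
Your proof is correct. The ``if'' direction matches the paper exactly (specialize to $F = \id_\C$), and your ``only if'' direction computes the same colimit formula as the paper, since your $\phi \star YF$ in $\P\C$ has components $\bigvee_i \phi(i) \circ \C(c,Fi)$, which is precisely what the paper calls $F_!(\phi)$.

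The packaging differs slightly: the paper factors the singular functor as $\C(F,\thg) = F^\ast \circ Y$, observes that $F^\ast$ always has a left adjoint $F_!$, and concludes directly that $\C(F,\thg)$ has a left adjoint (namely $L \circ F_!$) whenever $Y$ does. You instead first establish the general lemma that every $\P\D$ admits all weighted colimits, and then invoke the standard fact that reflective subcategories inherit colimits by reflecting. Your route buys a reusable strengthening of Example~\ref{ex:pctotal}; the paper's route is a line shorter and avoids the explicit verification of the colimit universal property in $\P\D$ by hiding it inside the adjunction $F_! \dashv F^\ast$.
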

\begin{proof}
  All colimits exist in $\C$ just when every singular functor $\C(F,
  \thg) \colon \C \to \P\I$ admits a left adjoint. Now $\C(F, \thg)$
  is the composite of $Y \colon \C \to \P\C$ with the $\Q$-functor
  $F^\ast \colon \P \C \to \P \I$ defined by $(F^\ast \phi)(x) =
  \phi(Fx)$, and $F^\ast$ always has a left adjoint $F_!  \colon \P\I
  \to \P\C$, defined by $(F_!  \psi)(c) = \bigvee_{x \in \I} \psi(x)
  \circ \C(c,Fx)$; whence every $\C(F, \thg)$ will have a left adjoint
  just when $Y$ does, that is, just when $\C$ is total.
\end{proof}

\section{Topological = total}
\label{sec:topological-=-total}
We are now ready to prove our main result: that total categories
enriched in the free quantaloid $\Q_\B$ correspond to topological
functors into $\B$.
Given a faithful functor $p \colon \E \to \B$, corresponding via
Proposition~\ref{prop:faithful} to a $\Q_\B$-category $\bar \E$, we
will notate by
\begin{equation}\label{eq:freecocompl}
  \cd[@!C]{
    \E \ar[rr]^Y \ar[dr]_{p} & & \P \E \ar[dl]^{\P p} \\ &
    \B
  }
\end{equation}
the arrow of $\cat{CAT} / \B$ corresponding to the Yoneda $\Q_\B$-functor
$\bar \E \to \P \bar \E$. Unfolding the definitions, an
object of $\P \E$ over $z \in \B$ is an object of $\P \bar \E$
with extent $z$; thus given by 
subsets $\phi(x) \subseteq \B(px,z)$ for each $x \in \E$ such that
\[
\text{$f \in \phi(y)$ and $g \in \E(x,y)$} \quad \Longrightarrow \quad
f \circ p(g) \in \phi(x)\rlap{ ;}
\]
in other words, by a subfunctor $\phi \subseteq \B(p\thg, z)$, which
we call a \emph{$p$-sieve on $z$}. Given another $p$-sieve $\psi
\subseteq \B(p \thg, w)$, a morphism $\phi \to \psi$ in $\P \E$ is a
map $\theta \colon z \to w$ in $\B$ such that $\theta \circ f \in
\psi(x)$ whenever $f \in \phi(x)$. The functor $Y \colon \E \to \P \E$
sends $e \in \E$ to the $p$-sieve $\E(\thg, e) \subseteq \B(p\thg,
pe)$. Note that $\P p$ is topological by Example~\ref{ex:pctotal}.

\begin{Rk}
  The construction of $\P p$ from $p$ is well-known in the categorical
  topology literature; in the notation of~\cite{Herrlich1976Initial},
  it is the completion $(\E^{-2}, p^{-2})$ of $(\E, p)$.
\end{Rk}

We now give our main result; in (ii), we allow ourselves to identify a
$p$-sieve $\phi \subseteq \B(p\thg, z)$ with the family of maps $(g \colon
px \to z)_{x \in \E, g \in \phi(x)}$.

\begin{Thm}\label{thm:main}
  Let $p \colon \E \to \B$ be a faithful functor and $\bar \E$ the
  corresponding $\Q_\B$-category. The following are equivalent:
\begin{enumerate}[(i)]
\item $p$ is topological;
\item $p$ admits all final liftings of $p$-sieves;
\item $Y \colon \E \to \P \E$ in~\eqref{eq:freecocompl} admits a left adjoint over $\B$;
\item $\bar \E$ is total.
\end{enumerate}
\end{Thm}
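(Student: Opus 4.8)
The plan is to prove the cycle of implications $(iv) \Rightarrow (iii) \Rightarrow (i) \Rightarrow (ii) \Rightarrow (iii)$, together with the (essentially definitional) equivalence of $(iii)$ and $(iv)$ coming from Proposition~\ref{prop:faithful}, which identifies $Y \colon \E \to \P\E$ over $\B$ with the Yoneda $\Q_\B$-functor $\bar\E \to \P\bar\E$; thus $\bar\E$ total means exactly that this $Y$ has a left adjoint in $\Q_\B\text-\cat{CAT}$, i.e. a left adjoint over $\B$ in the sense of $\cat{CAT}/\B$. The implications $(i)\Rightarrow(ii)$ and $(iii)\Rightarrow(iv)$ are immediate, so the real content is $(ii)\Rightarrow(iii)$ and $(iii)\Rightarrow(i)$.

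For $(ii)\Rightarrow(iii)$: given a $p$-sieve $\phi \subseteq \B(p\thg, z)$, viewed as the family $(g\colon px \to z)_{x\in\E,\,g\in\phi(x)}$, take a final lifting $\bar z \in \E$ with $p\bar z = z$. I would show $\bar z$ is the value at $\phi$ of a left adjoint $L$ to $Y$. Concretely, the defining property~\eqref{eq:lift} of the final lifting says: for $\theta\colon z \to pe$, there is a lift $\bar z \to e$ iff every $\theta\cdot g$ lifts, $x \to e$; and the latter is precisely the condition that $\theta\colon z \to pe$ defines a morphism $\phi \to \E(\thg,e) = Ye$ in $\P\E$. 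Hence $\E(\bar z, e) \cong \P\E(\phi, Ye)$ naturally in $e$, over $\B$ (both sides sit over $\B(z, pe)$, compatibly). Faithfulness of $p$ makes $\bar z$ unique, so this assignment $\phi \mapsto \bar z$ extends to a functor $L$ with $L \dashv Y$ over $\B$; the adjunction identity to check reduces, by faithfulness, to an identity downstairs in $\B$, which holds automatically.

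For $(iii)\Rightarrow(i)$: suppose $Y$ has a left adjoint $L$ over $\B$. Given an arbitrary (possibly large) family $(x_i \in \E)_{i\in I}$, $(g_i \colon px_i \to z)_{i\in I}$, I would first close it up into a $p$-sieve: let $\phi(x) = \{\,g_i \circ p(h)\mid i\in I,\ h\in\E(x,x_i)\,\} \subseteq \B(px, z)$, which is readily a subfunctor of $\B(p\thg, z)$ and whose associated family has exactly the same final liftings as the original one (a lift through $g_i$ of $\theta$ gives a lift through $g_i\circ p(h)$ by composition, and conversely taking $h = \id$). Then set $\bar z \defeq L\phi$; since $L\dashv Y$ over $\B$ and $L\phi$ lies over $\abs\phi = z$, we get $\E(\bar z, e)\cong \P\E(\phi, Ye)$ naturally over $\B(z,pe)$, and unwinding $\P\E(\phi,Ye)$ exactly recovers the biconditional~\eqref{eq:lift}. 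Hence $\bar z$ is a final lifting of the original family, and $p$ is topological.

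The main obstacle I anticipate is bookkeeping the ``over $\B$'' condition carefully at each step — ensuring the bijections $\E(\bar z,e)\cong\P\E(\phi,Ye)$ are not merely bijections of hom-sets but are compatible with the projections to $\B(z,pe)$, so that the adjunction genuinely lives in $\cat{CAT}/\B$ (equivalently in $\Q_\B\text-\cat{CAT}$) — and verifying that passing to the generated $p$-sieve in $(iii)\Rightarrow(i)$ neither creates nor destroys final liftings. Both points are routine once set up, the key leverage throughout being that faithfulness of $p$ collapses all $2$-dimensional coherence to checkable equalities in $\B$, which is exactly why the na\"ive notion of totality behaves well here.
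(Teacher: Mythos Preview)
Your proposal is correct and takes essentially the same approach as the paper: both arguments rest on identifying a left adjoint to $Y$ at a sieve $\phi$ with a final lifting of $\phi$, and on reducing an arbitrary family to the $p$-sieve it generates (your formula for $\phi(x)$ is exactly the paper's~\eqref{eq:generatedsieve}). The only difference is organizational---the paper separates the proof into $(i)\Leftrightarrow(ii)$, $(ii)\Leftrightarrow(iii)$, $(iii)\Leftrightarrow(iv)$, whereas you run the cycle through $(iii)\Rightarrow(i)$ directly---but the underlying content is identical.
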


\begin{proof}
  Clearly (i) implies (ii); conversely, suppose that $p$ admits final
  liftings of $p$-sieves; we will show that any family $\textbf g =
  (g_i \colon px_i \to z)_{i \in I}$ admits a final lifting. Given
  such a $\mathbf g$, form the $p$-sieve $\phi \subseteq \B(p\thg, z)$
  with
  \begin{equation}\label{eq:generatedsieve}\phi(x) = \{f \colon px \to z \mid f = g_i \circ pk
  \text{ for some $i \in I$ and $k \colon x \to x_i$ in $\E$}\}\rlap{
    ,}\end{equation} and let $\bar z$ be a final lifting of $\phi$. We
  claim that $\bar z$ is also a final lifting of $\mathbf g$; which
  will follow so long as for all $\theta \in \B(z, pe)$,
     \begin{align*}
    & \text{$\theta \circ g_i \colon px_i \to pe$
      lifts to a map $x_i \to e$ for all $i \in I$}\\
    \Longleftrightarrow \quad
    & \text{$\theta \circ g \colon px \to pe$
      lifts to a map $x \to e$ for all $g \in \phi(x)$}\rlap{ .}
  \end{align*}   
  The leftward implication follows since each $g_i \in \phi(x_i)$; the
  rightward since each $g \in \phi(x)$ factors as $g_i \circ pk$. Thus
  (ii) implies (i).
  We now show that (ii) $\Leftrightarrow$ (iii).  To say that $Y$ has
  a left adjoint at an object $\phi \subseteq \B(p\thg, z)$ of $\P \E$
  is to say that there is an object $\bar \phi \in \E$ with $p\bar
  \phi = z$ such that, for all $\theta \in \B(z, pe)$,
  \begin{equation}\tag{\textdagger}\label{eq:topoproof}
    \text{$ z \xrightarrow{\theta} pe$ lifts to a map $\bar \phi \to e$
      in $\E$
      iff it lifts to a map $\phi \to Ye$ in $\P\E$.}
  \end{equation}   
  But to say that $\theta$ lifts to a map $\phi \to Ye$ is to say that
  $\theta \circ g \colon px \to pe$ lifts to a map $x \to e$ for every
  $g \in \phi(x)$, and so condition (\textdagger) says precisely that
  $\bar \phi$ is a final lifting of the sieve $\phi$.
  Finally, (iii) $\Leftrightarrow$ (iv) by 
  Proposition~\ref{prop:faithful}.
\end{proof}

\section{Duality}\label{sec:duality}
In Remark~\ref{rk:topoduality} we mentioned the \emph{topological
  duality theorem}, which states that a functor $p \colon \E \to \B$
is topological if and only if $p^\op \colon \E^\op \to \B^\op$ is
so. In this section, we explain this result in terms of a general result of
quantaloid-enriched category theory: namely, that the notion of total
$\Q$-category is self-dual.
The starting point is the adjoint functor theorem for $\Q$-categories.

\begin{Prop}\label{prop:preadj}\cite[Proposition
  4.6]{Stubbe2005Categorical}
  If $F \colon \C \to \D$ is a $\Q$-functor and $d \in \D$, then a
  right adjoint for $F$ at $d$ is given by $Gd = \D(F, d) \star 1_\C$
  whenever this colimit exists in $\C$ and is preserved by $F$.
\end{Prop}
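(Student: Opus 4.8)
The plan is to check the defining equation of a pointwise right adjoint directly against the universal property of the weighted colimit $Gd := \D(F, d) \star 1_\C$, letting the hypothesis that $F$ preserve this colimit supply exactly the half of that equation which existence alone does not. Recall that $Gd$, which automatically has extent $\abs d$, is a right adjoint for $F$ at $d$ precisely when $\C(c, Gd) = \D(Fc, d)$ in $\Q(\abs c, \abs d)$ for every $c \in \C$; I would establish the two inequalities separately.

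For $\D(Fc, d) \leqslant \C(c, Gd)$, only existence of the colimit is needed. The singular $\Q$-functor of $1_\C$ is the Yoneda embedding $Y$, so the universal property of $Gd$ reads $\C(Gd, c) = \P\C(\D(F, d), Yc)$ for all $c$. Putting $c = Gd$ and using $1_{\abs d} \leqslant \C(Gd, Gd)$,
\[
  1_{\abs d} \ \leqslant\ \C(Gd, Gd) \ =\ \P\C(\D(F, d), Y(Gd)) \ =\ \textstyle\bigwedge_{x \in \C} [\D(Fx, d), \C(x, Gd)]\rlap{ ,}
\]
whence $\D(Fx, d) \leqslant \C(x, Gd)$ for every $x \in \C$ by adjointness of the operations $[\thg, \thg]$ to whiskering; now set $x = c$.

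For the reverse inequality $\C(c, Gd) \leqslant \D(Fc, d)$ I would use that $F$ preserves the colimit: this means $FGd$ is the weighted colimit $\D(F, d) \star F$ of $F \colon \C \to \D$, i.e.\ $\D(FGd, e) = \P\C(\D(F, d), \D(F, e))$ for all $e \in \D$. Evaluated at $e = d$, together with $1_{\abs d} \leqslant \P\C(\D(F, d), \D(F, d))$, this gives the counit inequality $1_{\abs d} \leqslant \D(FGd, d)$. Then for any $c \in \C$, using in turn that $F$ is a $\Q$-functor, that composition in $\D$ is monotone with $1_{\abs d} \leqslant \D(FGd, d)$, and the composition axiom of the $\Q$-category $\D$,
\[
  \C(c, Gd) \ \leqslant\ \D(Fc, FGd) \ \leqslant\ \D(FGd, d) \circ \D(Fc, FGd) \ \leqslant\ \D(Fc, d)\rlap{ .}
\]
With the previous inequality this yields $\C(c, Gd) = \D(Fc, d)$ for all $c$, so $Gd$ is a right adjoint for $F$ at $d$.

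I expect the only step carrying genuine content --- and the reason the preservation hypothesis cannot be dropped --- to be this reverse inequality: without preservation, $Gd$ may be strictly larger than the true right-adjoint value, and there is no counit $FGd \to d$ to compose against. Everything else is an unwinding of the definitions of $\Q$-functor and of weighted colimit, with no size subtleties, since every hom in sight is an object of $\Q$.
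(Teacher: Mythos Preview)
Your proof is correct and follows essentially the same argument as the paper's: both establish $\D(Fc,d) \leqslant \C(c,Gd)$ from the universal property of the colimit evaluated at $Gd$, and the reverse inequality from the counit $1_{\abs d} \leqslant \D(FGd,d)$ obtained via preservation, composed against functoriality of $F$. You have merely spelled out in slightly more detail the adjointness step and the use of the $\Q$-functor axiom that the paper leaves implicit.
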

\begin{proof}
  Under the stated hypotheses, we must verify that $\C(c, Gd) = \D(Fc,
  d)$. On the one hand, we have $1_{\abs d} \leqslant \C(Gd, Gd) = \P\C(\D(F,
  d), \C(1, Gd))$, whence $\D(Fc,d) \leqslant \C(c, Gd)$ as required;
  conversely, since $FGd$ has the universal property of $\D(F,d) \star
  F$, we have $1_{\abs d} \leqslant \P\C(\D(F,d),\D(F,d)) = \D(FGd,
  d)$ whence $\C(c,Gd) \leqslant \D(FGd, d) \circ \D(Fc,FGd) \leqslant
  \D(Fc, d)$ as required.
\end{proof}

\begin{Cor}[Adjoint functor theorem]\label{cor:adjoint}
  If $\C$ is a total $\Q$-category, and $F \colon \C \to \D$ preserves
  all (large) colimits, then $F$ has a right adjoint in $\Q$-$\cat{CAT}$.
\end{Cor}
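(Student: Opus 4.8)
The plan is to obtain the right adjoint $G$ pointwise, combining the pointwise adjoint functor theorem of Proposition~\ref{prop:preadj} with the characterisation of totality in Proposition~\ref{prop:totalallcolims}. Since $\C$ is total, Proposition~\ref{prop:totalallcolims} tells us that $\C$ admits \emph{all} (possibly large) weighted colimits; in particular, for each $d \in \D$ the colimit $\D(F,d) \star 1_\C$ of the identity $\Q$-functor $1_\C \colon \C \to \C$ weighted by the presheaf $\D(F,d) \in \P\C$ exists in $\C$, and has extent $\abs{\D(F,d)} = \abs d$. This is the one place where the full strength of the hypothesis is used: the weight $\D(F,d)$ lives on the possibly large $\Q$-category $\C$, so mere cocompleteness would not suffice. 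By assumption $F$ preserves all large colimits, hence preserves each $\D(F,d) \star 1_\C$; so Proposition~\ref{prop:preadj} applies and shows that, setting $Gd \defeq \D(F,d)\star 1_\C$, the object $Gd$ is a right adjoint for $F$ at $d$, i.e.\ $\abs{Gd} = \abs d$ and $\C(c, Gd) = \D(Fc,d)$ in $\Q(\abs c, \abs d)$ for every $c \in \C$.

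Next I would assemble these objects into a $\Q$-functor $G \colon \D \to \C$; because $\Q\text-\cat{CAT}$ is locally posetal, this is purely a matter of checking a few inequalities, with no naturality data to supply. The assignment $d \mapsto Gd$ is extent-preserving. Instantiating the defining equation of $Gd'$ at $c = Gd$ gives $\C(Gd,Gd') = \D(FGd, d')$, while instantiating that of $Gd$ at $c = Gd$ gives $1_{\abs d}\leqslant \C(Gd,Gd) = \D(FGd,d)$; composing these and using the $\Q$-category axioms for $\D$ yields
\[
\D(d,d') = \D(d,d')\circ 1_{\abs d} \leqslant \D(d,d')\circ \D(FGd,d) \leqslant \D(FGd,d') = \C(Gd,Gd'),
\]
so $G$ is a $\Q$-functor. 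The unit inequality $1_{\abs c}\leqslant \C(c,GFc)$ then holds because $\C(c,GFc) = \D(Fc,Fc)\geqslant 1_{\abs{Fc}} = 1_{\abs c}$, and the counit inequality $1_{\abs d}\leqslant \D(FGd,d)$ is exactly the instance $c = Gd$ recorded above; the triangle identities are automatic in the locally posetal setting, so $F$ is left adjoint to $G$.

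I do not expect any genuine obstacle here: once totality is invoked to produce the large colimits $\D(F,d)\star 1_\C$ and the preservation hypothesis is used to feed Proposition~\ref{prop:preadj}, everything else is bookkeeping dictated by the poset-enriched structure. The only conceptual point worth flagging in the write-up is the role of \emph{totality} (as opposed to mere cocompleteness) in supplying colimits indexed by the possibly large category $\C$ itself — precisely the feature that allows this adjoint functor theorem to dispense with solution-set conditions.
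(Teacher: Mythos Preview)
Your proof is correct and follows exactly the approach the paper intends: the paper states this result as an immediate corollary of Proposition~\ref{prop:preadj} without giving a separate proof, and you have correctly supplied the details---using totality (via Proposition~\ref{prop:totalallcolims}) to guarantee existence of the large colimits $\D(F,d)\star 1_\C$, invoking the preservation hypothesis to apply Proposition~\ref{prop:preadj} pointwise, and then checking that the resulting assignment is a $\Q$-functor right adjoint to $F$. Nothing to add.
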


We now apply this result to show that the notion of totality for
$\Q$-categories is self-dual. First we make clear the sense of that
duality.

  \begin{Defn}
    The \emph{copresheaf category} $\P^\dagger \C$ on a $\Q$-category
    $\C$ has as objects $\phi$, families of maps $\phi(x) \colon
    \abs \phi \to \abs x$ satisfying $\C(x,y) \circ \phi(x) \leqslant
    \phi(y)$, and hom-objects defined by $\P^\dagger\C(\phi, \psi) =
    \textstyle\bigwedge_{x \in C} \{\psi(x), \phi(x)\}$ (note the
    reversal of order!). The dual Yoneda embedding $Y^\dagger \colon
    \C \to \P^\dagger \C$ sends $x$ to $\C(x, \thg)$.
  We say that $\C$ is \emph{cototal} if $Y^\dagger \colon \C \to \P^\dagger \C$
  admits a right adjoint.
\end{Defn}
\begin{Rk}
  We can go on to define a \emph{weighted limit} $\{\psi, F\}$ of a
  $\Q$-functor $F \colon \I \to \C$ by a weight $\psi \in \P^\dagger
  \I$ as a right adjoint at $\psi$ to the dual singular functor $\C(1,
  F) \colon \C \to \P^\dagger \I$; then as in
  Proposition~\ref{prop:totalallcolims}, we may conclude that a $\Q$-category
  is cototal just when it admits all large limits.
\end{Rk}
\begin{Rk}\label{rk:opposite}
 Any $\Q$-category $\C$ has an opposite $\C^\op$ which is a
 $\Q^\op$-category; now $\P^\dagger$ as defined above is equally
 $(\P(\C^\op))^\op$, and $Y^\dagger$ the opposite of $Y \colon \C^\op
 \to \P(\C^\op)$. In these terms, the $\Q$-category $\C$ is cototal
 just when $\C^\op$ is a total $\Q^\op$-category.
\end{Rk}
For a faithful functor $p \colon \E \to \B$, seen as a category
enriched over the free quantaloid $\Q_\B$, applying the copresheaf
construction yields the topological functor $\P^\dagger p \colon
\P^\dagger \E \to \B$ for which objects over $z$ are cosieves $\phi
\subseteq \B(z, p\thg)$ and morphisms from $\phi$ to $\psi \subseteq
\B(w, p\thg)$ are morphisms $\theta \colon z \to w$ of $\B$ such that
$f \in \psi(x)$ implies $f \circ \theta \in \phi(x)$;
in the notation of~\cite{Herrlich1976Initial}, this is the completion
$(\E^{2}, p^{2})$ of $(\E, p)$.

\begin{Thm}\label{thm:duality}\cite[Proposition 5.10]{Stubbe2005Categorical}
  A $\Q$-category $\C$ is total just when it is cototal.
\end{Thm}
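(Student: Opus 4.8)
The plan is to derive the equivalence from two ingredients: the adjoint functor theorem (Corollary~\ref{cor:adjoint}), and the passage to opposite categories. First I would reduce to a single implication. By Remark~\ref{rk:opposite}, a $\Q$-category $\C$ is cototal exactly when $\C^\op$ is a total $\Q^\op$-category; so if we prove that \emph{every total $\Q$-category is cototal}, for an arbitrary quantaloid $\Q$, then applying this statement to $\C^\op$ over $\Q^\op$ yields at once that every cototal $\C$ is total, and the theorem follows. So assume $\C$ is a total $\Q$-category; we must exhibit a right adjoint to $Y^\dagger \colon \C \to \P^\dagger \C$ in $\Q\text-\cat{CAT}$.

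By Corollary~\ref{cor:adjoint}, since $\C$ is total, such a right adjoint exists provided $Y^\dagger$ preserves all (possibly large) weighted colimits; and by Proposition~\ref{prop:totalallcolims} every such colimit exists in $\C$, so this is a pure preservation statement, with no existence caveat. Now $\P^\dagger\C = (\P(\C^\op))^\op$ and $Y^\dagger$ is the opposite of the Yoneda embedding $Y \colon \C^\op \to \P(\C^\op)$, by Remark~\ref{rk:opposite}; under this translation the weighted colimits of $\C$ become the weighted limits of $\C^\op$, and $Y^\dagger$ preserves the former if and only if $Y$ preserves the latter. So everything reduces to a single general fact, with no totality hypothesis: for any quantaloid $\Q'$ and any $\Q'$-category $\A$, the Yoneda embedding $Y \colon \A \to \P\A$ preserves every weighted limit that exists in $\A$.

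To prove this I would first observe that $\P\A$ admits \emph{all} (large) weighted limits, computed pointwise: given $G \colon \I \to \P\A$ and $\psi \in \P^\dagger\I$, the presheaf $b$ with extent $\abs\psi$ and $b(a) = \bigwedge_{i \in \I}\{\psi(i), G(i)(a)\}$ --- a meet formed in the complete poset $\Q'(\abs a, \abs \psi)$, and hence available however large $\I$ may be --- is the weighted limit $\{\psi, G\}$; this is a routine verification using the identities relating $[\thg,\thg]$, $\{\thg,\thg\}$ and joins in $\Q'$. Applying this with $G = YF$ gives $\{\psi, YF\}(a) = \bigwedge_{i}\{\psi(i), \A(a, Fi)\}$. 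On the other hand, unwinding the universal property of $\{\psi, F\}$ in $\A$ and using the formula for the hom-objects of $\P^\dagger\I$, we compute
\[
(Y\{\psi, F\})(a) = \A(a, \{\psi, F\}) = \P^\dagger\I(\A(a, F\thg), \psi) = \textstyle\bigwedge_{i}\{\psi(i), \A(a, Fi)\}\rlap{ ,}
\]
the same presheaf as $\{\psi, YF\}$. Hence $Y$ preserves the limit, as required, and tracing back the reductions gives the theorem.

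The one point that genuinely needs care is the size bookkeeping in the previous paragraph: the weighted limits invoked in $\P\A$ are large ones, and the whole argument turns on the fact that they nonetheless exist --- which is precisely what the completeness of the hom-posets of a quantaloid buys us. Everything else --- the reduction to one implication, the appeal to the adjoint functor theorem, and the pointwise description of limits in presheaf categories --- is formal.
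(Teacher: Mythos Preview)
Your proof is correct and follows essentially the same strategy as the paper's: show that $Y^\dagger$ preserves all (large) colimits, invoke the adjoint functor theorem (Corollary~\ref{cor:adjoint}) to obtain its right adjoint when $\C$ is total, and then appeal to duality for the converse. The only cosmetic difference is that the paper asserts colimit-preservation of $Y^\dagger$ directly (as ``a straightforward calculation''), whereas you pass through the opposite-category identification of Remark~\ref{rk:opposite} to recast this as limit-preservation of the ordinary Yoneda embedding $Y \colon \C^\op \to \P(\C^\op)$ and then supply that calculation explicitly; the two routes are formally equivalent.
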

\begin{proof}
  A straightforward calculation shows that the dual Yoneda embedding
  $Y^\dagger \colon \C \to \P^\dagger \C$ preserves all colimits; thus, if $\C$ is total, then
  $Y^\dagger$ has a right adjoint, whence $\C$ is cototal. The
  converse implication is dual.
\end{proof}
\begin{Rk}\label{rk:limcolim}
  More generally, Proposition~\ref{prop:preadj} and its dual provide
  formulas for computing limits in $\Q$-categories in terms of
  colimits and vice versa; c.f.~\cite[Proposition
  5.8]{Stubbe2005Categorical}. The situation is encapsulated in terms of
  the~\emph{Isbell adjunction}~\cite{Isbell1960Adequate}
  \begin{equation}\label{eq:isbell}
  \cd[@C+3em]{
    \P\C \ar@<4.5pt>[r]^{\mathord \uparrow = \P\C(1, Y)} \ar@{}[r]|{\bot} &
    \P^\dagger \C \ar@<4.5pt>[l]^{\mathord \downarrow =
      \P^\dagger\C(Y^\dagger, 1)}
  }
  \end{equation}
  between the singular functor of $Y^\dagger$ and the dual singular
  functor of $Y$. Given $\psi \in \P^\dagger \C$, the weighted limit
  $\{\psi, 1_\C\}$ can be computed as $(\mathord \downarrow \psi)
  \star 1_\C$ whenever the latter colimit exists. Dually, $\phi \star
  1_\C$ can be computed as the limit $\{\mathord \uparrow \phi,
  1_\C\}$. 
\end{Rk}

In the context of enrichment over a free quantaloid $\Q_\B$, taking
Theorem~\ref{thm:duality} together with Remark~\ref{rk:opposite} and
Theorem~\ref{thm:main} recovers the topological duality theorem of
Remark~\ref{rk:topoduality}. From Remark~\ref{rk:limcolim}, we obtain
the explicit formula for computing final liftings along $p$ in terms
of initial ones: the Isbell adjunction for $p \colon \E \to B$ sends a
sieve $\phi \subseteq \B(p\thg, z)$ to the cosieve $\mathord \uparrow
\phi$, and a cosieve $\psi \subseteq \B(z, p
\thg)$ to the sieve $\mathord \downarrow \psi$ defined by the
following formulae:
\begin{equation}\label{eq:isbelltopo}
\begin{gathered}
\mathord \uparrow \phi = \{g \colon z \to px \mid g \circ h \colon
py \to px \text{ lifts to $y \to x$ $\forall h \colon py \to z$
  in $\phi$}\}
\\
\mathord \downarrow \psi = \{h \colon py \to z \mid g \circ h \colon
py \to px \text{ lifts to $y \to x$ $\forall g \colon z \to px$
  in $\psi$}\}\rlap{ .}
\end{gathered}
\end{equation}
Thus given a family of maps $\mathbf g =
(g_i \colon px_i \to z)_{i \in I}$, on forming the sieve $\phi
\subseteq \B(p\thg, z)$ they generate, and the conjugate cosieve
$\mathord \uparrow \phi \subseteq \B(p\thg, z)$,
Remark~\ref{rk:limcolim} asserts that an initial lifting for $\mathbf
g$ can be obtained as a final lifting for $\mathord \uparrow \phi$;
and dually.

\section{MacNeille completions}
In this final section, we give an application of our main result to
MacNeille completions. As in the introduction, the \emph{MacNeille
  completion} of a preorder $X$ is the poset $\R X$ whose elements are
pairs $(L, U)$ of subsets of $X$ with $L = {\downarrow\! U}$ and $U =
{\uparrow\! L}$, ordered by $(L, U) \leqslant (L', U')$ iff $L
\subseteq L'$ (equivalently, $U \supseteq U'$); here we write
\begin{equation*}
\downarrow U  = \{\ell \in X \mid \ell \leqslant u \text{ $\forall u \in U$}\}
\quad \text{and} \quad
\uparrow L  = \{u \in X \mid \ell \leqslant u \text{  $\forall \ell \in
  L$}\}\rlap{ .}
\end{equation*}
$\R X$ is a complete lattice, with meets $\bigwedge_i (L_i, U_i) =
(\bigwedge_i L_i, \mathord \uparrow(\bigwedge_i L_i))$ and joins
defined dually, and there is an order-embedding $J \colon X \to \R X$
sending $x$ to $(\downarrow\!\{x\}, \uparrow\!\{x\})$ that preserves
all meets and joins that exist in $X$ and is join- and
meet-dense. These properties in fact serve to characterise $\R X$ up
to equivalence of preorders; it follows that $\R$ is idempotent, in
the sense that $J$ is an equivalence whenever $X$ is a complete
preorder. There is an alternate characterisation in terms of
cut-continuous maps~\cite{Bishop1978A-universal}. A \emph{lower cut}
in a poset is a subset of the form $\downarrow U$, and a map of
preorders $f \colon X \to Y$ is called \emph{lower cut-continuous}
when $f^{-1}$ preserves lower cuts. Now the assignation $X \mapsto \R
X$ provides a bireflection of the $2$-category of preorders and lower
cut-continuous maps to its full sub-$2$-category on the complete
preorders.

In~\cite{Herrlich1976Initial}, Herrlich introduced the notion of
\emph{MacNeille completion} of a faithful functor to a topological
one; this completion has the same good properties of the MacNeille
completion of a preorder, and indeed reduces to it in the case where
$\B = 1$. In this section, we will justify the nomenclature by
exhibiting the topological MacNeille completion as an instance of the
general process of ``MacNeille completion'' for categories enriched
over an arbitrary base; the construction here was first described (for
unenriched categories) by
Isbell~\cite{Isbell1960Adequate}. It is rather less well-behaved for
general enriched categories than for preorders, but when the
enrichment base happens to be a quantaloid, all of the good properties
of the order-theoretic case are retained; it is this situation that we
shall now describe.

We motivate the construction with some remarks about the MacNeille
completion of a preorder $X$. Note that if $(L,U) \in \R X$, then $L$
is a downset and $U$ an upset in $X$; and on viewing $X$ as a category
enriched in the one-object quantaloid $(\cat 2, \wedge, \top)$, such
downsets and upsets in $X$ are the respective objects of the presheaf
and copresheaf categories $\P X$ and $\P^\dagger X$. The operations
$\uparrow$ and $\downarrow$ described above are now precisely those of
the Isbell adjunction~\eqref{eq:isbell} for $X$. This adjunction is a
Galois connection between posets; and $\R X$ is one of the several
equivalent descriptions of the poset of fixpoints of this Galois
connection.

Motivated by this, we now describe the MacNeille completion of a general
quantaloid-enriched category. First a preparatory result on
adjunctions in the $\Q$-enriched context; the proof is entirely
straightforward and so omitted.
\begin{Prop}\label{prop:galois}
  For any adjunction $F \colon \C \rightleftarrows \D \colon G$ of
  $\Q$-categories, there are canonical isomorphisms induced by $F$ and
  $G$ between the
  following $\Q$-categories:
  \begin{enumerate}[(a)\quad\ \ ]
  \item The full replete image of $G$ in $\C$;
  \item The full subcategory of $\C$ on those objects $X$ with 
    $X \cong GFX$;
  \item The $\Q$-category of $GF$-algebras in $\C$;
  \item[(d)--(f)] The duals of (a)--(c);
  \item[(g)\quad\ \ ] The $\Q$-category whose objects are pairs $(c, d)$ with $Fc
    \cong d$ and $c \cong Gd$ and whose hom-object from $(c,d)$ to
    $(c',d')$ is $\C(c,c') = \D(d,d')$.
 \end{enumerate}
The $\Q$-categories (a)--(c) are reflective in $\C$, while
(d)--(f) are coreflective in $\D$.
\end{Prop}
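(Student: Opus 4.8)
The plan is to use the fact that $\Q\text-\cat{CAT}$ is locally posetal, so that an adjunction $F \colon \C \rightleftarrows \D \colon G$ consists of nothing more than $\Q$-functors $F, G$ together with $\Q$-transformations $\eta \colon 1_\C \leqslant GF$ and $\varepsilon \colon FG \leqslant 1_\D$, the triangle identities then being automatic; explicitly, $c \leqslant GFc$ for all $c \in \ob \C$ and $FGd \leqslant d$ for all $d \in \ob \D$, where $x \leqslant y$ for objects of a common extent means $1_{\abs x} \leqslant \C(x,y)$. First I would record three elementary facts. First, a $\Q$-functor preserves isomorphisms of objects, and isomorphic objects have literally equal hom-arrows, since each $\Q(A,B)$ is merely a poset: if $x \cong x'$ then $\C(x,y) = \C(x',y)$ and $\C(y,x) = \C(y,x')$, by composing on the appropriate side with the units witnessing $x \cong x'$. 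Secondly, $GF$ is an idempotent monad on $\C$ and $FG$ an idempotent comonad on $\D$: whiskering $\varepsilon$ successively by $F$ and $G$ gives $GFGFc \leqslant GFc$, while $\eta$ at $GFc$ gives $GFc \leqslant GFGFc$, so $GFGFc \cong GFc$, and dually in $\D$. Thirdly, the adjunction hom-equality $\C(c, Gd) = \D(Fc, d)$ holds, by the usual sandwich: $\D(Fc, d) \leqslant \C(GFc, Gd) \leqslant \C(c, Gd)$ using $\Q$-functoriality of $G$ and $\eta_c$, and conversely $\C(c, Gd) \leqslant \D(Fc, FGd) \leqslant \D(Fc, d)$ using $\Q$-functoriality of $F$ and $\varepsilon_d$.

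Next I would dispatch the identifications among (a)--(c). For an object $c$ of $\C$: $c$ lies in the replete image of $G$ iff $c \cong Gd$ for some $d \in \D$, iff $c \cong GFc$; indeed, taking $d = Fc$ gives one implication, and for the other $GFc \cong GFGd \cong Gd \cong c$ using the first two facts. Since $GF$ is an idempotent monad, these are also exactly the objects carrying a $GF$-algebra structure, such a structure amounting, in the poset-enriched world, to the bare property $GFc \leqslant c$. Hence (a), (b) and (c) denote the same full subcategory of $\C$, and are trivially isomorphic; dually (d)--(f) denote the same full subcategory of $\D$, cut out by $FG$ in place of $GF$.

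The one substantive identification is that with (g). I would exhibit mutually inverse equivalences through the evident projections: $c \mapsto (c, Fc)$ from (b) to (g), and $(c, d) \mapsto c$ from (g) to (b). Both are well defined --- $(c, Fc)$ satisfies the conditions defining (g) precisely because $c \cong GFc$, and if $(c,d)$ is an object of (g) then $c \cong Gd \cong GFc$, so $c$ is an object of (b) --- and on objects they are mutually inverse up to isomorphism in $\Q\text-\cat{CAT}$. On hom-arrows each is an isomorphism, which is exactly the hom-equality $\C(c,c') = \D(d,d')$ built into the definition of (g); this in turn follows from the first and third facts by the chain $\C(c,c') = \C(GFc, GFc') = \D(FGFc, Fc') = \D(Fc, Fc') = \D(d, d')$, using $FGFc \cong Fc$ and $d \cong Fc$. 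Projecting onto the second coordinate instead identifies (g) with (e), hence with all of (d)--(f). Finally, the corestriction of $GF \colon \C \to \C$ to the subcategory (b) is a $\Q$-functor $R$, and $R$ is left adjoint to the inclusion with unit $\eta$, the counit being the $GFc \leqslant c$ available on objects of (b); so (a)--(c) are reflective in $\C$, and dually $FG$ corestricts to a coreflector onto (d)--(f) in $\D$.

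Given that the statement is, as noted, entirely straightforward, the only point that calls for any care is the bookkeeping of isomorphism-of-objects against equality-of-hom-arrows: concretely, establishing the hom-equality $\C(c, Gd) = \D(Fc, d)$ and then reading off from it that the two projections between (b) and (g) are genuinely bijective on hom-arrows, rather than merely essentially surjective. Everything else reduces to whiskering $\eta$ and $\varepsilon$ and invoking $\Q$-functoriality, together with the observation that $\Q$-functors, being extent-preserving on objects, make expressions such as ``$GFc \cong c$'' meaningful to begin with.
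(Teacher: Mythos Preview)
Your argument is correct. The paper itself omits the proof entirely, writing only that it ``is entirely straightforward and so omitted'', so there is no approach to compare against; your write-up is precisely the kind of routine unwinding the paper declines to give. One cosmetic remark: the statement speaks of ``canonical isomorphisms'', but as you implicitly observe, the map $(c,d) \mapsto c$ between (g) and (b) is only an equivalence, since $(c,d)$ and $(c,Fc)$ agree only up to $d \cong Fc$. This is a looseness in the paper's phrasing rather than a defect in your proof.
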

We write $\cat{Fix}(F,G)$ for any of the isomorphic $\Q$-categories
(a)--(g).

\begin{Defn}
  The~\emph{MacNeille completion} $\R\C$ of a $\Q$-category $\C$ is
  the $\Q$-category $\cat{Fix}(\mathord \uparrow, \mathord
  \downarrow)$ associated to the Isbell adjunction~\eqref{eq:isbell};
  for concreteness, we take the representation in (b), so that $\R\C$
  comprises those $\phi \in \P\C$ with $\phi = \mathord \downarrow
  \mathord \uparrow \phi$.  Since $\uparrow$ and $\downarrow$ send
  representables to representables, the Yoneda embedding factors
  through $\R\C$ as $J \colon \C \to \R\C$, say.
\end{Defn}

\begin{Rk}\label{rk:limitsofreps}
  Since $\mathord \downarrow \colon \P^\dagger \C \to \P\C$ is right
  adjoint to the dual singular functor $\P\C(1, Y)$, it must send
  $\psi$ to $\{\psi, Y\}$. Thus every $\phi \in \R\C$ is a limit
  $\{\mathord \uparrow \phi, Y\}$ of representables in $\P\C$; on the
  other hand, as $\R\C$ is reflective in $\P\C$, it is
  limit-closed, and so $\R\C$ is in fact the closure of the
  representables in $\P\C$ under (large) limits.
\end{Rk}

When $\Q$ is the one-object quantaloid $(\cat 2, \wedge, \top)$, the
MacNeille completion of a $\Q$-category (=preorder) $X$ is the
classical MacNeille completion $\R X$. When $\Q$ is the one-object
quantaloid $(\mathbb R_+, +, 0)$, the MacNeille completion of
$\Q$-category (=generalised metric space) was identified
in~\cite{Willerton2013Tight} with its \emph{directed tight span}
completion. When $\Q$ is the quantaloid associated to a space $X$ or a
site $(\C, J)$, the MacNeille completion of a (Cauchy-complete,
skeletal) $\Q$-category is its MacNeille completion internal to the
topos of sheaves.  Finally, for the free quantaloids which are our
main concern in this paper, we have that:
\begin{Prop}
  Let $\Q_\B$ be the free quantaloid on a category $\B$. If $p \colon
  \E \to \B$ is a faithful functor corresponding to a $\Q_\B$-category
  $\bar \E$, then the MacNeille completion of $p$ \emph{qua} faithful
  functor corresponds to the MacNeille completion of $\bar \E$
  \emph{qua} quantaloid-enriched category.
\end{Prop}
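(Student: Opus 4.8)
The plan is to compare the two MacNeille completions by identifying what ``MacNeille completion of a faithful functor'' means concretely and matching it term-by-term with $\cat{Fix}(\mathord\uparrow,\mathord\downarrow)$ for $\bar\E$. First I would recall from the earlier sections that, under Proposition~\ref{prop:faithful}, the presheaf $\Q_\B$-category $\P\bar\E$ corresponds to the topological functor $\P p \colon \P\E \to \B$ whose objects over $z$ are $p$-sieves $\phi \subseteq \B(p\thg,z)$, and dually $\P^\dagger\bar\E$ corresponds to $\P^\dagger p \colon \P^\dagger\E \to \B$ whose objects over $z$ are $p$-cosieves $\psi \subseteq \B(z,p\thg)$. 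The Isbell adjunction $\mathord\uparrow \dashv \mathord\downarrow$ between these is, by~\eqref{eq:isbelltopo}, given on objects over a fixed $z$ by exactly the formulae already displayed; so the $\Q_\B$-category $\R\bar\E = \cat{Fix}(\mathord\uparrow,\mathord\downarrow)$ corresponds, via Proposition~\ref{prop:faithful} again, to the faithful functor whose objects over $z$ are pairs $(\phi,\psi)$ of a $p$-sieve and a conjugate $p$-cosieve on $z$ with $\mathord\uparrow\phi = \psi$ and $\mathord\downarrow\psi = \phi$, i.e.\ ``relative Dedekind cuts'' over $z$, with a morphism $(\phi,\psi)\to(\phi',\psi')$ being a $\theta\in\B(z,w)$ compatible with sieves (equivalently with cosieves). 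This is a completely explicit description of the right-hand side of the claimed correspondence.

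Next I would invoke Herrlich's description from~\cite{Herrlich1976Initial} of the MacNeille completion of $(\E,p)$: it is precisely the topological functor whose fibre over $z$ consists of such conjugate (sieve, cosieve) pairs over $z$ — this is the ``$(\E^{02}, p^{02})$'' (or MacNeille) construction, obtained by intersecting the two one-sided completions $(\E^{-2},p^{-2}) = \P p$ and $(\E^{2},p^{2}) = \P^\dagger p$ along the conjugacy relation given by the Galois connection of taking all liftable maps in each direction. Since the earlier discussion around~\eqref{eq:isbelltopo} has already identified that Galois connection as the object-wise action of the Isbell adjunction, the two fibres agree on the nose, as do the morphisms. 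The embeddings also match: Herrlich's $(\E,p) \to$ (MacNeille completion) sends $e$ to the pair of sieve $\E(\thg,e)$ and cosieve $\E(e,\thg)$ of representable (co)sieves, which is exactly the factorisation $J \colon \bar\E \to \R\bar\E$ of the Yoneda embedding through the fixpoints, because $\mathord\uparrow$ and $\mathord\downarrow$ send representables to representables.

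I would then spell out the minimal verifications needed: (1) that $J \colon \bar\E\to\R\bar\E$, transported through Proposition~\ref{prop:faithful}, is an object over $\B$ with the same underlying functor as Herrlich's embedding (immediate from matching objects and morphisms in each fibre); (2) that $\R\bar\E$ is total, hence by Theorem~\ref{thm:main} corresponds to a topological functor — this is Remark~\ref{rk:limitsofreps}, since $\R\bar\E$ is reflective in the total $\P\bar\E$; and (3) that the universal property characterising Herrlich's completion (initial density, final density, preservation of final and initial liftings by the embedding) is the translation under Proposition~\ref{prop:faithful} of the standard properties of $J$ established in the order-theoretic and enriched contexts — namely that $J$ is (co)dense and that $\R\C$ is both limit- and colimit-closed in the relevant fixpoint sense. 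Much of this is formal transport along the $2$-equivalence of Proposition~\ref{prop:faithful} once the fibre-wise identification is in hand.

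The main obstacle is bookkeeping rather than conceptual: one must check carefully that the \emph{morphisms} of the two constructions match, not merely the objects of the fibres — i.e.\ that ``$\theta$ respects sieves'' and ``$\theta$ respects conjugate cosieves'' really are equivalent for conjugate pairs, and that this equivalence is exactly the morphism condition in $\cat{Fix}(\mathord\uparrow,\mathord\downarrow)$ under representation (g) of Proposition~\ref{prop:galois}. A secondary subtlety is aligning conventions: Herrlich's completion is typically taken \emph{amnestic} (fibres posets), whereas $\R\bar\E$ as defined here is only the preorder reflection of that; so the statement should be read up to equivalence of faithful functors, matching the remark earlier that amnesticity is inessential. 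Once these points are pinned down the result follows by combining the explicit fibre description above with Proposition~\ref{prop:faithful}, Theorem~\ref{thm:main}, and Remark~\ref{rk:limitsofreps}.
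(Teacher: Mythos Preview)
Your proposal is correct and follows essentially the same strategy as the paper: unfold the enriched MacNeille completion $\R\bar\E$ concretely in terms of sieves and cosieves via the Isbell adjunction formulae~\eqref{eq:isbelltopo}, and observe that this matches Herrlich's construction in~\cite{Herrlich1976Initial}. The only minor difference is that the paper uses representation~(b) of Proposition~\ref{prop:galois} (the full subcategory of $\P\E$ on those $\phi$ with $\phi = \mathord\downarrow\mathord\uparrow\phi$) rather than your representation~(g) (conjugate pairs), and the paper's proof is a terse two sentences whereas you add further verification of the embedding $J$, the universal properties, and the morphism conditions---none of which is required for the statement as given, but none of which is incorrect either.
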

\begin{proof}
  The faithful functor $\R p \colon \R\E \to \B$ corresponding to $\R
  \overline \E$ has as domain the full subcategory of $\P\E$ on those
  sieves $\phi \subseteq \P(p\thg, b)$ with $\phi = \mathord
  \downarrow \mathord \uparrow \phi$, for $\mathord \downarrow$ and
  $\mathord \uparrow$ defined as in~\eqref{eq:isbelltopo}.  This is
  precisely the construction of the MacNeille completion of $p$ given in in~\cite{Herrlich1976Initial}.
\end{proof}

Our remaining results give characterisations of the MacNeille
completion of a $\Q$-category that extend those for the classical
case; having identified the scope of the notions in particular
examples, we shall henceforth feel free to work only in the general
situation. We begin with a preparatory result concerning density.
\begin{Prop}
  For a $\Q$-functor $F \colon \C \to \D$, the following
  are equivalent:
  \begin{enumerate}[(a)]
  \item $\D(F, 1) \colon \D \to \P\C$ is fully faithful;
  \item Each $d \in \D$ is the colimit $\D(F, d) \star F$;
  \item Each $d \in \D$ is a colimit $\phi \star F$ for some $\phi \in \P\C$.
  \end{enumerate}
\end{Prop}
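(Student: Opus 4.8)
The plan is to treat (a) and (b) as two readings of one equation, and to put all the real content into the implication (c) $\Rightarrow$ (b).

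First I would dispatch the preliminaries. The functor written $\D(F,1)$ is the singular $\Q$-functor $\D(F,\thg)\colon\D\to\P\C$, sending $d$ to the presheaf on $\C$ with extent $\abs d$ and components $\D(F,d)(x)=\D(Fx,d)$; that this is a well-defined $\Q$-functor uses only that $F$ preserves extents and satisfies $\C(x,y)\leqslant\D(Fx,Fy)$, and in particular it gives, for all $d,d'\in\D$, the inequality $\D(d,d')\leqslant\P\C(\D(F,d),\D(F,d'))$. Granting this, (a) $\Leftrightarrow$ (b) is immediate from the definition of a weighted colimit: taking $\phi=\D(F,d)$, which has extent $\abs d$, the object $d$ carries the universal property of $\D(F,d)\star F$ exactly when $\D(d,c)=\P\C(\D(F,d),\D(F,c))$ for all $c\in\D$, and asserting this for every $d$ is precisely the statement that $\D(F,1)$ is fully faithful. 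The implication (b) $\Rightarrow$ (c) is trivial.

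The substance is (c) $\Rightarrow$ (b). Assume $d$ carries the universal property of $\phi\star F$ for some $\phi\in\P\C$ (this loses no generality, since isomorphic objects of a $\Q$-category have the same hom into any fixed object), so $\abs\phi=\abs d$ and $\D(d,c)=\P\C(\phi,\D(F,c))$ for all $c$. The key point is to instantiate this at $c=d$: the unit axiom gives $1_{\abs d}\leqslant\D(d,d)=\P\C(\phi,\D(F,d))$ in $\Q(\abs d,\abs d)$, and unwinding $\P\C(\alpha,\beta)=\bigwedge_x[\alpha(x),\beta(x)]$ together with the adjunctions defining the $[\thg,\thg]$ shows this says exactly that $\phi\leqslant\D(F,d)$ pointwise in $\P\C$. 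Because $\P\C(\thg,\chi)$ is order-reversing in its first variable, this yields $\P\C(\D(F,d),\D(F,c))\leqslant\P\C(\phi,\D(F,c))=\D(d,c)$ for every $c$; combining with the reverse inequality from the preliminary step forces $\D(d,c)=\P\C(\D(F,d),\D(F,c))$, which is the universal property of $\D(F,d)\star F$. Hence (b) holds, and since (a) $\Leftrightarrow$ (b) and (b) $\Rightarrow$ (c), the three conditions are equivalent.

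I do not expect a genuine obstacle; the argument is short once the right comparison is visible. The one step that is more than bookkeeping is producing and using the canonical inequality $\phi\leqslant\D(F,d)$ --- a shadow of the unit of the colimit $d=\phi\star F$ --- which ``absorbs'' an arbitrary weight $\phi$ into the canonical weight $\D(F,d)$ without changing the represented hom in $\P\C$. Everything else is the elementary order theory inside $\P\C$, all of which reduces to the whiskering adjunctions $f\circ(\thg)\dashv[f,\thg]$ of the quantaloid $\Q$.
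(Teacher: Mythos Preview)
Your proof is correct and follows essentially the same route as the paper's own argument: both identify (a) and (b) as two readings of the equation $\D(d,d')=\P\C(\D(F,d),\D(F,d'))$, dismiss (b)$\Rightarrow$(c) as trivial, and prove (c)$\Rightarrow$(b) by instantiating the colimit universal property at $c=d$ to obtain $1_{\abs d}\leqslant\P\C(\phi,\D(F,d))$ and then exploiting contravariance of $\P\C(\thg,\chi)$. The only difference is cosmetic: you make the intermediate inequality $\phi\leqslant\D(F,d)$ explicit, whereas the paper passes directly from $1_{\abs d}\leqslant\P\C(\phi,\D(F,d))$ to $\P\C(\D(F,d),\D(F,d'))\leqslant\P\C(\phi,\D(F,d'))$ via the composition law in $\P\C$.
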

We call $F \colon \C \to \D$ with any of these equivalent properties
\emph{dense}. The equivalence of (a) and (b) is standard enriched
category theory~\cite[Theorem 5.1]{Kelly1982Basic}; that of (b) and
(c) is peculiar to the quantaloid-enriched case.
\begin{proof}
  (a) and (b) both say that $\D(d,d') = \P\C(\D(F,d), \D(F,d'))$ for
  all $d, d' \in \D$, and (b) clearly implies (c). It remains to show
  (c) $\Rightarrow$ (b); thus fixing $d \in \D$, we must show that
  $\P\C(\D(F,d), \D(F,d')) \leqslant \D(d,d')$ for all $d' \in \D$ (the converse
  inequality is automatic by functoriality of $\D(F,1)$). By
  assumption, $d$ is $\phi \star F$ for some $\phi$; and so $1_{\abs d}
 \leqslant \D(d,d) = \P\C(\phi, \D(F,d))$, whence
 $\P\C(\B(F,d), \B(F,d')) \leqslant \P\C(\phi, \B(F,d')) = \D(d,d')$
 as required.
\end{proof}

\begin{Prop}
  The MacNeille completion $J \colon \C \to \R\C$ 
  has the properties that:
\begin{enumerate}[(a)]
\item $\R\C$ is total;
\item $J$ is fully faithful;
\item $J$ is dense and codense.
\end{enumerate}
These properties 
imply that:
\begin{enumerate}[(a)]
\addtocounter{enumi}{3}
\item $J$ preserves all limits and colimits that exist in $\C$;
\item Any full embedding of $\C$ into a total $\Q$-category $\D$
  extends to a full embedding of $\R\C$ into $\D$.
\end{enumerate}
and moreover characterise $\R\C$ and $J$ up to equivalence; in
particular, $\R$ is idempotent in the sense that $J$ is an equivalence
whenever $\C$ is total.
\end{Prop}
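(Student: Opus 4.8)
The plan is to establish (a), (b), (c) directly, deduce (d) and (e) from them, and then prove the uniqueness-up-to-equivalence, from which idempotence follows at once. Throughout I work with the representation of $\R\C$ as the full subcategory of $\P\C$ on those $\phi$ with $\phi = \mathord\downarrow\mathord\uparrow\phi$, and with $J$ the corestriction of the Yoneda embedding.

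\emph{Properties (a)--(c).} For (a): by Proposition~\ref{prop:galois}, $\R\C$ is reflective in $\P\C$ with reflector $\mathord\downarrow\mathord\uparrow$; since $\P\C$ is total (Example~\ref{ex:pctotal}) and hence cocomplete (Proposition~\ref{prop:totalallcolims}), and a weighted colimit in a reflective subcategory is obtained by applying the reflector to the corresponding colimit in the ambient category, $\R\C$ is cocomplete and so total. For (b): it suffices to recall that $Y\colon\C\to\P\C$ is fully faithful, $\P\C(Yx,Yy) = \bigwedge_z[\C(z,x),\C(z,y)] = \C(x,y)$ (``$\geqslant$'' by the composition law, ``$\leqslant$'' by evaluating at $z=x$), and that $\R\C$ is a full subcategory containing the representables. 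For (c): the Yoneda lemma $\P\C(Yc,\psi)=\psi(c)$ gives $\phi\star Y = \phi$ in $\P\C$ for every $\phi$, so applying the reflector, $\phi\star J = \mathord\downarrow\mathord\uparrow\phi = \phi$ whenever $\phi\in\R\C$; thus every object of $\R\C$ is a colimit of $J$-images and $J$ is dense. Codensity is the order-dual, obtained by running the same argument with the coreflective realisation of $\R\C$ inside $\P^\dagger\C$ (Proposition~\ref{prop:galois}(e)), or by appeal to self-duality (Remark~\ref{rk:opposite}).

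\emph{(a)--(c) imply (d).} By Remark~\ref{rk:limitsofreps}, each $d\in\R\C$ is, as an object of $\P\C$, a (large) limit of representables; since limits of presheaves are computed pointwise and each $Yc$ sends a colimit $\phi\star F$ of $\C$ to the corresponding limit (indeed $(Yc)(\phi\star F) = \C(\phi\star F,c) = \P\I(\phi,\C(F,c))$ unfolds to $\bigwedge_i[\phi(i),(Yc)(Fi)]$), the same holds of $d$. Hence $\R\C(J(\phi\star F),d) = d(\phi\star F) = \bigwedge_i[\phi(i),d(Fi)] = \P\I(\phi,\R\C(JF,d))$ for all $d\in\R\C$, which is precisely the universal property of $J(\phi\star F)$ as $\phi\star JF$; so $J$ preserves colimits, and limit preservation is dual. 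For (a)--(c) $\Rightarrow$ (e): given a full embedding $K\colon\C\to\D$ with $\D$ total, set $\bar K\phi = \phi\star K$ (existing since $\D$ is cocomplete); then $\bar KJ = K$ because a colimit weighted by a representable returns the corresponding value. Fully faithfulness reduces, via $\D(\phi\star K,\psi\star K) = \P\C(\phi,\D(K,\psi\star K))$, to the claim $\D(K,\psi\star K) = \psi$ for $\psi\in\R\C$. Here $\psi(c)\leqslant\D(Kc,\psi\star K)$ is the unit of the colimit; conversely $\D(\psi\star K,Kc') = \P\C(\psi,\D(K,Kc')) = \P\C(\psi,\C(\thg,c')) = (\mathord\uparrow\psi)(c')$ using that $K$ is fully faithful, whence by the composition law in $\D$, $\D(Kc,\psi\star K)\leqslant\bigwedge_{c'}\{(\mathord\uparrow\psi)(c'),\C(c,c')\} = (\mathord\downarrow\mathord\uparrow\psi)(c) = \psi(c)$, the last step since $\psi\in\R\C$.

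\emph{Uniqueness and idempotence.} Suppose $K\colon\C\to\D$ is another functor with $\D$ total, $K$ fully faithful, and $K$ dense and codense. Since $\D$ is total, the singular functor $\D(K,\thg)\colon\D\to\P\C$ has left adjoint $\colim_K = ({\thg})\star K$; density of $K$ makes $\D(K,\thg)$ fully faithful with counit $\colim_K(\D(K,d)) \cong d$, while fully faithfulness together with codensity of $K$ yields $\mathord\downarrow\mathord\uparrow\,\D(K,d) = \D(K,d)$, so $\D(K,\thg)$ corestricts to a fully faithful functor $\D\to\R\C$ still left adjoint to $\colim_K$ restricted to $\R\C$. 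By the computation in the proof of (e), the unit $\phi\to\D(K,\phi\star K)$ of this adjunction is an isomorphism on $\R\C$; hence $\D(K,\thg)\colon\D\to\R\C$ is an equivalence, and since $\D(K,Kc) = \C(\thg,c) = Jc$ it carries $K$ to $J$, identifying $(\D,K)$ with $(\R\C,J)$. Idempotence is then immediate: if $\C$ is already total, the identity $\id_\C$ is fully faithful, dense and codense, so $(\C,\id_\C)\simeq(\R\C,J)$, i.e.\ $J$ is an equivalence. The step I expect to be the main obstacle is the identity $\D(K,\psi\star K) = \psi$ for $\psi\in\R\C$ in (e), reused in the uniqueness proof: the tempting route is to argue that the colimit $\psi\star K$ coincides with the limit $\{\mathord\uparrow\psi,K\}$, but this requires density of $K$ and so is unavailable; one must instead compute $\D(K,\psi\star K)$ directly, playing the composition law in $\D$ against the fixpoint equation $\psi = \mathord\downarrow\mathord\uparrow\psi$.
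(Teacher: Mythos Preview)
Your proof is correct. Parts (a)--(c) are essentially the paper's proof; in particular your density argument (via $\phi \star J = \phi$) is a direct unfolding of the paper's appeal to the dual of Remark~\ref{rk:limitsofreps}.

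The genuine divergence is in (d), (e), and the uniqueness. For (d) the paper gives a one-line conceptual argument: a fully faithful dense functor factors the limit-preserving Yoneda embedding through a fully faithful functor, hence preserves limits; dually for colimits. Your pointwise computation via $d = \{\mathord\uparrow d, Y\}$ is correct but harder work. For (e) the paper argues by closure: inside the total $\D$, first close the image of $\C$ under colimits to get a total $\D'$ in which $\C$ is dense (so $\D'$ embeds fully in $\P\C$), then close under limits inside $\D'$ to recover $\R\C$ by Remark~\ref{rk:limitsofreps}. Uniqueness then falls out by observing that density and codensity of $K$ force $\D' = \D = \D''$. You instead build the extension explicitly as the left Kan extension $\bar K\phi = \phi\star K$ and verify full faithfulness by proving the identity $\D(K,\psi\star K)=\psi$ for $\psi\in\R\C$ directly from the fixpoint equation $\psi=\mathord\downarrow\mathord\uparrow\psi$; the same identity then drives your uniqueness proof via the singular functor $\D(K,\thg)$. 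Your route is more computational but has the advantage of producing an explicit formula for the embedding $\R\C\hookrightarrow\D$, and of making visible exactly where the hypothesis $\psi\in\R\C$ is consumed; the paper's closure argument is shorter and more conceptual but leaves the extension implicit.
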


\begin{proof}
  $\R\C$ is reflective in the total $\P\C$, and hence itself total;
  while $J$ is a factorisation of the fully faithful $Y \colon \C \to
  \P\C$ and so itself fully faithful. Codensity follows from
  Remark~\ref{rk:limitsofreps} and density from its dual.  For (d), a
  fully faithful dense functor $F \colon \A \to \B$ must preserve
  limits, since it factorises the limit-preserving Yoneda embedding;
  dually, codensity implies colimit-preservation. For (e), given $F
  \colon \C \to \D$ a full embedding with $\D$ total, let $\D'$ be the
  closure of $\C$ in $\D$ under colimits; then $\D'$ is again total,
  and $\C \to \D'$ is dense, so that $\D'$ can be identified with a
  limit-closed subcategory of $\P\C$ containing the
  representables. Now let $\D''$ be the closure of the representables
  in $\D'$ under limits; then by construction and
  Remark~\ref{rk:limitsofreps}, $\D'' \simeq \R\C$, as required.
  Finally, for the uniqueness, observe that in the construction just
  given, we have $\D' = \D$ if $J$ is dense, and then $\D = \D' =
  \D''$ if in addition $J$ is codense. The final clause follows since,
  if $\C$ is total, then $1_\C \colon \C \to \C$ satisfies properties
  (a)--(c).
\end{proof}

We conclude by describing a universal property of the MacNeille
completion of a $\Q$-category which generalises a
universal property~\cite{Bishop1978A-universal} of the classical
case.  In the following definition, recall that for any $\Q$-functor
$F \colon \C \to \D$, the $\Q$-functor $F^\ast \colon \P\D \to \P\C$
is defined at $\phi$ by $(F^\ast \phi)(x) = \phi(Fx)$.

\begin{Defn}
  A $\Q$-functor $F \colon \C \to \D$ is \emph{cut-cocontinuous}
  if $F^\ast \colon \P \D \to \P \C$ maps $\R \D$ into $\R\C$.
\end{Defn}
\begin{Prop}\label{prop:totalfunctorchar}
  $F \colon \C \to \D$ is cut-cocontinuous if and only if the singular
  functor $\D(F, 1) \colon \D \to \P\C$ lands inside $\R\C$.
\end{Prop}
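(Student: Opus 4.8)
The plan is to reduce the statement to a single bookkeeping identity: the singular functor $\D(F,1)\colon\D\to\P\C$ is the composite of the Yoneda embedding $Y\colon\D\to\P\D$ with $F^\ast\colon\P\D\to\P\C$. Indeed, for $d\in\D$ and $x\in\C$ one has $\D(F,d)(x)=\D(Fx,d)=(Yd)(Fx)=(F^\ast Yd)(x)$, and the extents match, so $\D(F,1)=F^\ast\circ Y$. The two implications then follow from two facts already in the paper: first, that $F^\ast$ has a left adjoint, exhibited in the proof of Proposition~\ref{prop:totalallcolims}, and so preserves all (large) limits, which in any case all exist in $\P\C$ (total by Example~\ref{ex:pctotal}, hence cototal by Theorem~\ref{thm:duality}); second, that by Remark~\ref{rk:limitsofreps} $\R\C$ is the closure of the representables in $\P\C$ under large limits, and is in particular limit-closed in $\P\C$ since it is reflective there.

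For the ``only if'' direction, assume $F$ is cut-cocontinuous, so that $F^\ast$ restricts to a functor $\R\D\to\R\C$. Since the Yoneda embedding $Y\colon\D\to\P\D$ factors through $\R\D$, each $Yd$ lies in $\R\D$; hence $\D(F,1)(d)=F^\ast(Yd)$ lies in $\R\C$, so $\D(F,1)$ lands inside $\R\C$ (the corestriction being a $\Q$-functor because $\R\C$ is full in $\P\C$).

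For the ``if'' direction, assume $\D(F,1)$ lands inside $\R\C$, and let $\psi\in\R\D$; we must show $F^\ast\psi\in\R\C$. By Remark~\ref{rk:limitsofreps}, $\psi$ is a large limit $\{\mathord\uparrow\psi,Y\}$ of representables, formed in $\P\D$; applying the limit-preserving $F^\ast$ gives $F^\ast\psi=\{\mathord\uparrow\psi,F^\ast Y\}=\{\mathord\uparrow\psi,\D(F,1)\}$, a limit formed in $\P\C$ of a diagram all of whose values lie in $\R\C$ by hypothesis. As $\R\C$ is limit-closed in $\P\C$, it follows that $F^\ast\psi\in\R\C$; hence $F^\ast(\R\D)\subseteq\R\C$ and $F$ is cut-cocontinuous.

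I do not anticipate a serious obstacle: once $\D(F,1)=F^\ast\circ Y$ is recognised, the proof is an assembly of the limit-density of representables in $\R\C$, the continuity of $F^\ast$, and the limit-closedness of $\R\C$. The one point calling for a little care is that $F^\ast$ be continuous for \emph{large}, not merely small, limits --- which is exactly what the explicit left adjoint from the proof of Proposition~\ref{prop:totalallcolims} secures --- together with keeping straight the variances of $F^\ast$, its left adjoint, and the Yoneda embeddings of $\C$ and $\D$.
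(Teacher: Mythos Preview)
Your proof is correct and follows essentially the same approach as the paper's: both identify $\D(F,1)=F^\ast\circ Y$, note that the ``only if'' direction is trivial since representables lie in $\R\D$, and prove the ``if'' direction using that $F^\ast$ preserves limits, that every object of $\R\D$ is a limit of representables, and that $\R\C$ is limit-closed in $\P\C$. Your version is more explicit about justifying the continuity of $F^\ast$ via its left adjoint and in citing Remark~\ref{rk:limitsofreps}, but the underlying argument is identical.
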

\begin{proof}
  We must show that $F^\ast$ maps $\R\D$ into $\R\C$ if and only if it
  maps the representables into $\R\C$. For the non-trivial direction,
  note that $F^\ast$ preserves all limits, each $\phi \in \R\D$ is
  a limit of representables in $\P\D$, and $\R\C$ is closed under
  limits in $\P\C$.
\end{proof}
\begin{Prop}\label{prop:totalcolim}
  For any $\Q$-functor $F \colon \C \to \D$, we have
\begin{align*}
&\text{$F$ is a left adjoint} \\ \Longrightarrow \quad
&\text{$F$ is cut-cocontinuous} \\ \Longrightarrow \quad
&\text{$F$ preserves all (large) colimits.}
\end{align*}
If $\C$ is total, then all three conditions are equivalent.
\end{Prop}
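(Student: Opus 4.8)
The plan is to establish the two implications directly and then, when $\C$ is total, close the cycle using the adjoint functor theorem. For ``$F$ a left adjoint $\Rightarrow$ $F$ cut-cocontinuous'', I would argue through Proposition~\ref{prop:totalfunctorchar}: it suffices to show that the singular functor $\D(F,1)\colon\D\to\P\C$ lands inside $\R\C$. If $F\dashv G$, then --- since $\Q$-$\cat{CAT}$ is locally posetal, so that an adjunction there is nothing but the equality $\D(Fc,d)=\C(c,Gd)$ of hom-objects --- the presheaf $\D(F,d)$ coincides with the representable $\C(\thg,Gd)$; and representables lie in $\R\C$ because the Yoneda embedding $Y\colon\C\to\P\C$ factors through $J\colon\C\to\R\C$. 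Hence $\D(F,1)$ takes values in $\R\C$, as wanted. This step is short.

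The substance lies in ``$F$ cut-cocontinuous $\Rightarrow$ $F$ preserves all colimits''. Fix $H\colon\I\to\C$ and $\phi\in\P\I$ for which the colimit $\phi\star H$ exists in $\C$, and set $\hat\phi\defeq H_!\phi\in\P\C$ using the left adjoint $H_!$ of $H^\ast$. Combining the defining property of $\phi\star H$, the identity $\C(H,c)=H^\ast Yc$ in $\P\I$, the adjunction $H_!\dashv H^\ast$, and the Yoneda lemma, one reads off that the presheaves $Y(\phi\star H)$ and $\hat\phi$ have the same image under $\mathord\uparrow=\P\C(1,Y)$ in the Isbell adjunction~\eqref{eq:isbell}. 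The key point is that whenever two presheaves $\alpha,\beta$ on $\C$ satisfy $\mathord\uparrow\alpha=\mathord\uparrow\beta$, then, by the adjunction $\mathord\uparrow\dashv\mathord\downarrow$ and the defining equation $\chi=\mathord\downarrow\mathord\uparrow\chi$ of the objects of $\R\C$, we have $\P\C(\alpha,\chi)=\P^\dagger\C(\mathord\uparrow\alpha,\mathord\uparrow\chi)=\P\C(\beta,\chi)$ for every $\chi\in\R\C$. Applying this with $\chi=F^\ast Yd$---which lies in $\R\C$ exactly because $F$ is cut-cocontinuous, by Proposition~\ref{prop:totalfunctorchar}---and unwinding both sides using the Yoneda lemma, $F_!\dashv F^\ast$, and $(FH)_!=F_!H_!$, one computes that $\D(F(\phi\star H),d)=\P\I(\phi,\D(FH,d))$ for every $d\in\D$; that is, $F(\phi\star H)$ has the universal property of the colimit $\phi\star FH$. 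Since $H$ and $\phi$ were arbitrary, $F$ preserves all colimits.

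Finally, if $\C$ is total and $F$ preserves all colimits, then Corollary~\ref{cor:adjoint} gives $F$ a right adjoint, so $F$ is a left adjoint; together with the two implications above, this shows the three conditions are equivalent under the hypothesis that $\C$ be total. I expect the middle implication to be the main obstacle: the subtlety is that the existence of $\phi\star H$ in $\C$ only provides that $Y(\phi\star H)$ and $\hat\phi$ agree when paired against representable presheaves, and this must be upgraded to agreement against all of $\R\C$. The right tool for that upgrade is the Isbell adjunction together with the description of $\R\C$ as the fixpoints of $\mathord\downarrow\mathord\uparrow$, rather than any direct manipulation of weighted-colimit formulas.
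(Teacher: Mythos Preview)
Your proposal is correct and follows essentially the same route as the paper. The only difference is presentational: the paper first reduces (via the argument underlying Proposition~\ref{prop:totalallcolims}) to colimits of the form $\phi\star 1_\C$ and then computes $\mathord\uparrow\phi=Y^\dagger v$, whereas you carry a general $H\colon\I\to\C$ throughout by passing to $\hat\phi=H_!\phi$; the core step---using the Isbell adjunction together with $\D(F,d)\in\R\C$ to identify $\P\C(\hat\phi,\D(F,d))$ with $\D(F(\phi\star H),d)$---is identical in both.
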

\begin{proof}
  For the first implication, if $F$ has a right adjoint $G$, then
  $\D(F,1) = YG \colon \D \to \P\C$ clearly lands in $\R\C$. For the
  second implication, it suffices by
  Proposition~\ref{prop:totalallcolims} to show that a
  cut-cocontinuous $F$ preserves any colimit $v = \phi \star 1_\C$
  existing in $\C$. Now $\mathord \uparrow \phi = \P\C(\phi, Y) =
  \C(v, 1) = Y^\dagger v$; and since $\D(F, 1)
  \colon \D \to \P \C$ factors through $\R\C$, we have that
  $\P\C(\phi, \D(F, 1)) = \P^\dagger\C(\mathord \uparrow \phi,
  \mathord \uparrow \D(F,1)) = \P^\dagger\C(Y^\dagger v, \mathord
  \uparrow \D(F,1)) = \D(Fv, 1)$, so that $Fv$
  is $\phi \star F$ as claimed. The final clause follows from the
  adjoint functor theorem.
\end{proof}

Now let $\cat{CCOCTS}$ be the $2$-category of $\Q$-categories,
cut-cocontinuous $\Q$-functors, and $\Q$-transformations, and let
$\cat{TOT}$ denote the full sub-$2$-category on the total
$\Q$-categories; by the preceding proposition, the morphisms of
$\cat{TOT}$ are the functors preserving all colimits.
\begin{Prop}
  The MacNeille completion $J \colon \C \to \R\C$ is the value at
  $\C$ of a left biadjoint to the inclusion $2$-functor
   $\cat{TOT} \to \cat{CCOCTS}$.
\end{Prop}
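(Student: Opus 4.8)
The plan is to verify a universal property: for each $\Q$-category $\C$ and each total $\Q$-category $\D$, the functor $J \colon \C \to \R\C$ induces an equivalence of hom-categories
\[
\cat{TOT}(\R\C, \D) \;\xrightarrow{\ (\thg) \circ J\ }\; \cat{CCOCTS}(\C, \D)\rlap{ .}
\]
Since everything in sight is locally posetal, it suffices to produce, for each cut-cocontinuous $F \colon \C \to \D$, a colimit-preserving extension $\tilde F \colon \R\C \to \D$ with $\tilde F J \cong F$, and to check that such an extension is essentially unique and that the assignment is $2$-natural up to the canonical isomorphisms. First I would construct $\tilde F$. By Proposition~\ref{prop:totalfunctorchar}, cut-cocontinuity of $F$ says exactly that the singular functor $\D(F,1) \colon \D \to \P\C$ lands inside $\R\C$; so it corestricts to a $\Q$-functor $\D(F,1) \colon \D \to \R\C$. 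Since $\D$ is total, Theorem~\ref{thm:duality} and Proposition~\ref{prop:preadj} (or rather its colimit form) show that $\D(F,1)$ has a left adjoint $\tilde F \colon \R\C \to \D$, given on objects by $\tilde F(\phi) = \phi \star F$ — a colimit which exists because $\D$ is total — where we view $\phi \in \R\C \subseteq \P\C$. One then checks $\tilde F J \cong F$: $J$ sends $x$ to the representable $\C(\thg,x)$, and $\C(\thg,x) \star F \cong Fx$ by the standard colimit formula, the Yoneda lemma in the quantaloid-enriched setting. Being a left adjoint, $\tilde F$ lies in $\cat{TOT}$ by Proposition~\ref{prop:totalcolim}.

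Next I would prove uniqueness. Suppose $G \colon \R\C \to \D$ is any colimit-preserving functor with $GJ \cong F$. By Remark~\ref{rk:limitsofreps}, every $\phi \in \R\C$ is a colimit — indeed $\phi = \phi \star Y$ where here $Y$ is the Yoneda embedding of $\C$ into $\P\C$, but since $\phi \in \R\C$ this exhibits $\phi$ as a colimit of representables \emph{inside} $\R\C$, i.e. $\phi \cong \phi \star J$ computed in $\R\C$. Applying the colimit-preserving $G$ gives $G\phi \cong \phi \star GJ \cong \phi \star F \cong \tilde F \phi$, naturally in $\phi$; so $G \cong \tilde F$. (One small point to nail down: that $J \colon \C \to \R\C$ is dense, which was recorded in the proposition on properties of $J$, so that $\phi \cong \phi \star J$ holds in $\R\C$ with weight $\phi$ itself — this is exactly property (b) of the density proposition applied to $J$, using $\R\C(J,\phi) = \P\C(Y,\phi)(\thg) = \phi$.) Finally, $2$-naturality: given cut-cocontinuous $H \colon \C \to \C'$, one must check that $\widetilde{(\R H)\circ(\thg)}$ agrees up to the canonical iso with precomposition by $H$, which follows formally from the defining colimit formula $\tilde F \phi = \phi \star F$ together with associativity of weighted colimits. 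The compatibility with $\Q$-transformations is automatic since hom-categories are posets.

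The main obstacle I expect is bookkeeping around the identification of $\R\C$ with a limit-closed subcategory of $\P\C$ and making sure the colimit $\phi \star F$ — formed using $\phi$ as a presheaf on $\C$ — is the \emph{right} thing, rather than some colimit indexed by $\R\C$ or $\P\C$. Concretely, one wants $\tilde F \phi = \phi \star F$ with $\phi \in \P\C$ an honest weight on $\C$, and the verification that $\D(\tilde F\phi, d) = \P\C(\phi, \D(F,d))$ — the defining property of the weighted colimit — matches the adjunction $\tilde F \dashv \D(F,1)$ corestricted to $\R\C$. This hinges on the fact that for $\phi \in \R\C$ and any $\psi \in \R\C$, the hom $\R\C(\phi,\psi)$ computed in $\R\C$ coincides with $\P\C(\phi,\psi)$ computed in $\P\C$ (as $\R\C$ is a full subcategory), so no information is lost in passing between the two pictures. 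Once that identification is made explicit, the rest is a routine assembly of the standard enriched-categorical facts already available in the excerpt.
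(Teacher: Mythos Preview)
Your proposal is correct and follows essentially the same route as the paper: define the extension by $\tilde F(\phi) = \phi \star F$, identify it as left adjoint to the corestriction of $\D(F,1)$ to $\R\C$, use Yoneda for $\tilde F J \cong F$, and use density of $J$ for essential uniqueness. The only small omission is the explicit verification that $J$ itself is cut-cocontinuous (needed so that precomposition with $J$ lands in $\cat{CCOCTS}(\C,\D)$), which the paper checks by observing that the singular functor $\R\C(J,1)$ is just the inclusion $\R\C \hookrightarrow \P\C$; your discussion of $2$-naturality in $\C$ is unnecessary, since a pointwise universal property suffices for a biadjoint.
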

\begin{proof}
  Let $\C$ be a $\Q$-category.  Since $\R\C$ is reflective in the
  total $\P\C$, it is itself total; moreover, $J \colon \C \to \R\C$
  is cut-cocontinuous by Proposition~\ref{prop:totalfunctorchar}, as
  its singular functor $\R\C(J, 1) \colon \R \C \to \P \C$ is the
  inclusion.  It remains
  to show that, for any total $\Q$-category $\D$, the restriction
  $\Q$-functor
  \begin{equation}\label{eq:kanres}
    \cat{CCOCTS}(\R\C, \D) \xrightarrow{(\thg) \circ J}
    \cat{CCOCTS}(\C, \D)
  \end{equation}
  is an equivalence of preorders; we do so by exhibiting an explicit
  pseudoinverse. Given a cut-cocontinous $F \colon \C \to \D$, define
  $F^{\#} \colon \R \C \to \D$ by $F^\#(\phi) = \phi \star F$. We must
  show that $F^\#$ is cut-cocontinuous; equivalently, by
  Proposition~\ref{prop:totalcolim}, that it is a left adjoint. By
  Proposition~\ref{prop:totalfunctorchar} and cut-cocontinuity of $F$,
  the singular functor $\D(F, 1) \colon \D \to \P\C$ factors through
  $\R\C$, as $H \colon \D \to \R\C$, say; then as $(\thg) \star F
  \colon \P\C \to \D$ is left adjoint to $\D(F,1)$, it follows that
  $F^\#$ is left adjoint to $H$, as required.  It remains to show that
  $(\thg)^\#$ is pseudoinverse to $(\thg) \circ J$, for which we need
  two things:
  \begin{enumerate}
  \item For any cut-cocontinuous $F \colon \C \to \D$, we have $F^\# J \cong F$; but
    for each $x \in \C$ we have $F^\#(Jx) = \C(\thg, x) \star F \cong
    Fx$ by the Yoneda lemma.
  \item For any (cut-)cocontinous $G \colon \R \C \to \D$, we have $(GJ)^\# \cong
    G$. But $(GJ)^\#(\phi) = \phi \star GJ \cong G(\phi \star J) \cong
    G(\phi)$ using cocontinuity of $G$ and density of $J$.\qedhere
  \end{enumerate}
\end{proof}
\begin{Rk}
  The notion of MacNeille completion is self-dual in that $\R\C =
  (\R(\C^\op))^\op$. It follows that $\R\C$ is also characterised by a
  dual universal property: it provides a bireflection of $\C$, seen as
  an object of the $2$-category $\cat{CCONTS}$ of $\Q$-categories and
  cut-continuous $\Q$-functors, onto the full sub-$2$-category
  $\cat{COTOT}$ whose objects are the cototal (=total)
  $\Q$-categories.
\end{Rk}

 \bibliographystyle{acm}

\bibliography{bibdata}
\end{document}